\numberwithin{equation}{section}
\newtheorem{theorem}{Theorem}[section]
\newtheorem{corollary}[theorem]{Corollary}
\newtheorem{lemma}[theorem]{Lemma}
\newtheorem{proposition}[theorem]{Proposition}
\theoremstyle{definition}
\theoremstyle{definition}
\theoremstyle{definition}
\theoremstyle{definition}
\def\C {{\mathbb C}}
\def\R {\mathbb{R}}
\def\N {\mathbb{N}_0}
\def\Z {\mathbb{Z}}
\def\d{{\,\rm d}}
\begin{document}

 \title{Log-type ultra-analyticity of elliptic equations with gradient terms}

\author[H. Dong]{Hongjie Dong}
\address[H. Dong]{Division of Applied Mathematics, Brown University, 182 George Street, Providence, RI 02912, USA}
\email{Hongjie\_Dong@brown.edu}
\thanks{H. Dong was partially supported by the NSF under agreement DMS-2350129.}

\author[M. Wang]{Ming Wang}
\address{School of Mathematics and Statistics, HNP-LAMA, Central South University, Hunan, Changsha, 410083, PR China} 	
\email{m.wang@csu.edu.cn}
\thanks{M. Wang is partially supported by the National Natural Science Foundation of China under grants 12171442 and 12171178.}

\date{\today}
\keywords{elliptic equations, analyticity of solutions}
\subjclass[2020]{35J15, 26E05, 35A20}

\begin{abstract}
It is well known that every solution of an elliptic equation is analytic if its coefficients are analytic. However, less is known about the ultra-analyticity of such solutions. This work addresses the problem of elliptic equations with lower-order terms, where the coefficients are entire functions of exponential type. We prove that every solution satisfies a quantitative logarithmic ultra-analytic bound and demonstrate that this bound is sharp. The results suggest that the ultra-analyticity of solutions to elliptic equations cannot be expected to achieve the same level of ultra-analyticity as the coefficients.
\end{abstract}
\maketitle

%\noindent\textbf{Keywords.}  elliptic equation, analytic function.

%\medskip

%\noindent\textbf{Mathematics Subject Classifications (2020).}  35J15, 26E05

\section{Introduction and the main results}

Recall that a function \( u: \Omega \subset \mathbb{R}^d \to \mathbb{C} \) is called analytic at a point \( x \in \Omega \) if there exist constants \( C, R > 0 \) such that \( |\partial_x^\alpha u(x)| \leq \frac{C \alpha!}{R^{|\alpha|}} \) for every multi-index \( \alpha \in \mathbb{N}_0^d \), where \(\mathbb{N}_0 = \{0, 1, 2, \ldots\}\) denotes the set of nonnegative integers. 
Here, and throughout, we use the convention \( \alpha! = \alpha_1! \alpha_2! \cdots \alpha_d! \) and \( |\alpha| = \alpha_1 + \alpha_2 + \cdots + \alpha_d \) if \( \alpha = (\alpha_1, \alpha_2, \ldots, \alpha_d) \in \mathbb{N}_0^d \). A function is analytic in a domain \( \Omega \) if it is analytic at every point in \( \Omega \).

The analyticity of solutions to elliptic equations is a classical topic in the literature. For example, every harmonic function in \( \Omega \) (satisfying \( \Delta u = 0 \)) is analytic in \( \Omega \). More generally, if \( L \) is an elliptic differential operator with analytic coefficients, then solutions to \( Lu = 0 \) are also analytic; see, for instance, \cite{John1955, Morrey-CPAM1957}. In certain contexts, one may require quantitative analytic bounds (with explicit constants \( C \) and \( R \)) to address various problems of interest. For example, such bounds on eigenfunctions of the Laplacian have been used in \cite{DonnellyNodalsets1988} to tackle Yau's conjecture on analytic manifolds, which concerns the Hausdorff measure of the nodal sets of eigenfunctions. Moreover, these analytic bounds are also applied to establish observability inequalities for heat equations with measurable observation sets; see \cite{ZhangCJEMS2014, WangAnalyticityCOCV2023}.

The quantity \(R\) is called the analytic radius of a function \(u\) at the point \(x\). This means that \(u\) can be extended to an analytic function \(u(z)\) in the ball \(\{z \in \mathbb{C}^d : |z - x| < R\}\). There has been extensive work devoted to finding quantitative bounds for the analytic radius of solutions to various PDEs; for example, see \cite{CheminOntheradius2020,DongOptimallocal2009,Herbst-Advmath2011,Titi-JFA2000} for results on the Navier-Stokes equations. If \(R = \infty\), then the function \(u\) is analytic in the entire space \(\mathbb{C}^d\), and in this case we call the function ultra-analytic. Typical examples of such functions are those whose Fourier transforms exhibit exponential decay at infinity:
\begin{align}\label{equ-four-decay}
\widehat{u}(\xi) = O(e^{-|\xi|^s}),
\end{align}
where \(s > 1\). In fact, one can show that such a function satisfies higher-order derivative bounds of the form:
\begin{align}\label{equ-intro-ana-1}
\|\partial_x^\alpha u\|_{L^\infty(\mathbb{R}^d)} \leq C M^{|\alpha|} (\alpha!)^{\delta}, \quad \forall \alpha \in \mathbb{N}_0^d,
\end{align}
for some constants \(C, M > 0\), where \(\delta = 1/s \in (0,1)\). 

Functions satisfying \eqref{equ-four-decay} play a significant role in the uncertainty principle in harmonic analysis; see \cite{EKPV-Bull.Amer.Math2012}. For instance, the classical Hardy uncertainty principle \cite{Hardy1933} states that any non-trivial function satisfying \eqref{equ-four-decay} with \(s = 2\) cannot decay faster than \(e^{-A|x|^2}\) as \(x \to \infty\) when \(A > 0\) is sufficiently large. Based on the uncertainty principle of ultra-analytic functions, several unique continuation inequalities have been established for Schrödinger equations in \cite{WWZ-JEMS} and for KdV equations in \cite{LiWangsiam2021}. 

Motivated by these results, we are interested in establishing ultra-analytic bounds for solutions to elliptic equations, such as \(\Delta u = Vu\) where \(V\) is ultra-analytic. Surprisingly, to the best of our knowledge, this problem has not been previously addressed. A closely related result is that the eigenfunctions of the harmonic oscillator \(H = -\Delta + |x|^2\) satisfy the ultra-analytic bound \eqref{equ-intro-ana-1} with \(\delta = 1/2\); see \cite{JamingSpectralestimates2021}. In other words, the solution to \(\Delta u = Vu\), where \(V = |x|^2 - \lambda\) and \(\lambda\) is an eigenvalue, satisfies the ultra-analytic bound \eqref{equ-intro-ana-1}. 

Initially, we hypothesized that \eqref{equ-intro-ana-1} would hold if \(V\) satisfies strong growth rates on its derivatives, for example if \(V\) is an entire function of exponential type. However, it turns out that this is not the case. In this work, we show that the solution only satisfies a log-type ultra-analytic bound, and furthermore, we demonstrate that this bound is sharp.

To state our result, we consider a more general elliptic equation that includes a gradient term:
\begin{align}\label{equ-elip}
\Delta u = W(x) \nabla u + V(x) u, \quad x \in \mathbb{R}^d,
\end{align}
where \(W: \mathbb{R}^d \to \mathbb{C}^d\) and \(V: \mathbb{R}^d \to \mathbb{C}\) are entire functions of exponential type \(C_0\) for some constant \(C_0 > 0\), namely,

\begin{align}\label{equ-V}
\sup_{x \in \mathbb{R}^d} |\partial_x^\alpha W(x)|,\, \sup_{x \in \mathbb{R}^d} |\partial_x^\alpha V(x)| \leq C_0^{|\alpha|}, \quad \forall \alpha \in \mathbb{N}_0^d.
\end{align}

We first present our result for the one-dimensional case:
\begin{theorem}[\(d = 1\)]\label{thm-1}
Let \(p \in [1, \infty]\) and \(u \in L^p(\mathbb{R})\) be a solution to \eqref{equ-elip} with \(\|u\|_{L^p(\mathbb{R})} = 1\). Then, there exists a constant \(K = K(p) > 0\), depending only on \(p\), such that the solution satisfies the following log-type ultra-analytic upper bound: for any \(\kappa > 1\),
\begin{align}\label{equ-ultra-ana}
\|\partial_x^n u\|_{L^p(\mathbb{R})} \leq \left( \kappa C_0 + K\left(\frac{1}{\log \kappa} + 1\right) \right)^n \frac{n!}{\log^n(n+e)}, \quad \forall n \in \mathbb{N}_0.
\end{align}
\end{theorem}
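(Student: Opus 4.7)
The plan is to set $A_n:=\|\partial_x^n u\|_{L^p(\mathbb{R})}$ and prove by strong induction on $n$ that $A_n\le B_n$, where $B_n:=M^n n!/\log^n(n+e)$ with $M:=\kappa C_0+K(1/\log\kappa+1)$. The sole analytic input is a recursion obtained from \eqref{equ-elip} by differentiating $n$ times, applying Leibniz's rule, taking $L^p$ norms, and invoking \eqref{equ-V}:
\[
A_{n+2}\le\sum_{k=0}^{n}\binom{n}{k}\,C_0^{\,n-k}\,(A_{k+1}+A_k).
\]
The base cases $A_0=1$ and $A_1\le C(p,C_0)$ follow from the normalization $\|u\|_{L^p(\mathbb{R})}=1$ together with the one-dimensional Landau--Kolmogorov interpolation $\|u'\|_{L^p}^2\le C_p\|u\|_{L^p}\|u''\|_{L^p}$ combined with the equation itself; the constant $K$ is then chosen large enough so that $A_0,A_1$ fit inside $B_0,B_1$.

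Substituting the inductive hypothesis into the recursion, the inductive step reduces to the combinatorial inequality
\[
\sum_{k=0}^{n}\binom{n}{k}\,C_0^{\,n-k}\,(B_{k+1}+B_k)\le B_{n+2}.
\]
Writing $L_m:=\log(m+e)$ and setting $j:=n-k$, a direct computation yields
\[
\frac{\binom{n}{k}\,C_0^{n-k}\,B_{k+1}}{B_{n+2}}=\frac{n-j+1}{j!\,(n+1)(n+2)\,M}\left(\frac{C_0}{M}\right)^{\!j}\frac{L_{n+2}^{\,n+2}}{L_{n-j+1}^{\,n-j+1}},
\]
with an entirely analogous expression for the $B_k$ contribution, so everything reduces to showing that the sum of these ratios over $j=0,1,\ldots,n$ is bounded by (say) $1/2$.

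The main obstacle is to estimate this sum sharply enough that the logarithmic denominator in $B_n$ survives. Extracting a factor $L_{n+2}^{\,j+1}$ and using the approximation $(L_{n+2}/L_{n-j+1})^{n-j+1}\approx 1$ for $j\ll n$ reduces the main term of the sum to essentially
\[
\frac{L_{n+2}}{(n+2)M}\sum_{j\ge 0}\frac{1}{j!}\left(\frac{C_0 L_{n+2}}{M}\right)^{\!j}=\frac{L_{n+2}}{(n+2)M}\,(n+2+e)^{C_0/M},
\]
which tends to zero as $n\to\infty$ precisely when $C_0/M<1$. Writing $M\ge\kappa C_0$ with $\kappa>1$ produces $C_0/M\le 1/\kappa$ and accounts for the leading $\kappa C_0$ summand in $M$. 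The additive correction $K(1/\log\kappa+1)$ is required to absorb two further losses: the secondary factor $(L_{n+2}/L_{n-j+1})^{n-j+1}$ when $j$ is not negligible compared with $n$ (whose summation contributes the $1/\log\kappa$ term), and the tail range $k\ll n$ together with the small-$n$ base cases (which contribute the additive constant). Carrying out this two-scale combinatorial estimate precisely is the technically delicate part; once it is in place, the induction closes and \eqref{equ-ultra-ana} follows.
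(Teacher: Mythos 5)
Your overall strategy matches the paper's exactly: differentiate the equation, apply Leibniz, take norms, derive a recursion for $A_n=\|\partial_x^n u\|_{L^p}$, handle $n=0,1,2$ by interpolation, and close the induction by proving a combinatorial inequality of the form $\sum_{k}\binom{n}{k}C_0^{n-k}(B_{k+1}+B_k)\le B_{n+2}$. Your correctly rewritten ratio
\[
\frac{n-j+1}{j!\,(n+1)(n+2)\,M}\Bigl(\frac{C_0}{M}\Bigr)^{j}\frac{L_{n+2}^{\,n+2}}{L_{n-j+1}^{\,n-j+1}}
\]
is, after reindexing, precisely the quantity controlled by the paper's Lemma 2.4. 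The problem is that you do not prove this key estimate; you give a heuristic and declare the rest "technically delicate." That missing estimate is not a corollary of standard facts — it is the substance of the theorem, and in the paper it occupies Lemmas 2.1--2.3 plus a five-way case split in Lemma 2.4.

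Two specific issues with the heuristic as written. First, the approximation $(L_{n+2}/L_{n-j+1})^{n-j+1}\approx 1$ is incorrect in the regime that dominates the sum: one has $(L_{n+2}/L_{n-j+1})^{n-j+1}\approx e^{(j+1)/\log n}$, which at $j\sim\log n$ (where the summand peaks, matching the paper's $l=n-j\sim\log n$) is of order $e$, and for $j\gg\log n$ it grows like $e^{j/\log n}$, so the factor cannot simply be discarded — it must be absorbed into the exponential and shown to be harmless uniformly in $n$. Second, the conclusion that the sum "tends to zero as $n\to\infty$ precisely when $C_0/M<1$" is not what the induction needs: you need a bound $\le 1/2$ (say) that is uniform in $n$ and whose dependence on $\kappa$ near $\kappa=1$ produces exactly the additive correction $K(1+1/\log\kappa)$. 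Producing that uniform, $\kappa$-explicit bound is where the paper's Stirling-type asymptotics for $a_j=1/((n-j)!\log^j(j+e))$ around $j\approx n-\log n$ enter, and none of that work is reconstructed here. Until that lemma is actually proved, the argument is an outline, not a proof.
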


The estimate \eqref{equ-ultra-ana} is stronger than typical analytic estimates, as the fixed analytic radius \( R \) is effectively enhanced by \( C \log n \). This is why we refer to it as a log-type ultra-analytic bound. The bound \eqref{equ-ultra-ana} is sharp in two key aspects. 
First, if \(C_0 > 0\) is fixed, then for any \(\lambda > 1\) and \(C > 0\), the estimate \eqref{equ-ultra-ana} cannot be improved to:
\begin{align}\label{equ-intro-ana-2}
\|\partial_x^n u\|_{L^p(\mathbb{R})} \leq C^n \frac{n!}{\log^{\lambda n}(n+e)}, \quad \forall n \in \mathbb{N}_0.
\end{align}
Notice that \eqref{equ-intro-ana-1} implies \eqref{equ-intro-ana-2} for any \(\lambda > 0\). Therefore, \eqref{equ-intro-ana-1} is not expected to hold for generic solutions to \eqref{equ-elip}.
Second, when \(C_0 \to \infty\), the condition \(\kappa > 1\) is essentially sharp. More precisely, the bound \eqref{equ-ultra-ana} cannot be strengthened to:
\begin{align}\label{equ-intro-ana-3}
\|\partial_x^n u\|_{L^p(\mathbb{R})} \leq \left( \kappa C_0 + C \right)^n \frac{n!}{\log^n(n+e)}, \quad \forall n \in \mathbb{N}_0
\end{align}
for any \(\kappa < 1\). In fact, we construct an explicit solution to \eqref{equ-elip} that demonstrates the failure of estimates \eqref{equ-intro-ana-2}-\eqref{equ-intro-ana-3}; see Section 2.2 for details.

The sharpness of Theorem \ref{thm-1} shows that the ultra-analyticity of solutions to elliptic equations is inherently weaker than the ultra-analyticity of the coefficients in the lower-order terms. This observation appears to be novel. In fact, every solution of \eqref{equ-elip} has at least the same regularity as that of coefficients if we are working in usual analytic spaces or Gevrey classes.

It is a natural extension to consider whether Theorem \ref{thm-1} can be applied to second-order elliptic equations with variable leading coefficients. Specifically, we consider the equation:
\begin{align}\label{equ-91-1}
    \partial_x(A(x) \partial_x u) = W(x) \partial_x u + V(x) u, \quad x \in \mathbb{R},
\end{align}
where \( A, W, V \) are entire functions of exponential type, satisfying similar bounds to those in \eqref{equ-V}. We will show that the solutions to \eqref{equ-91-1} satisfy the log-type ultra-analytic bounds like those in \eqref{equ-ultra-ana} if and only if \( A \) is a constant multiple of the exponential function \( e^{iCx} \) for some \( C \in \mathbb{R} \); see Proposition \ref{prop-div}. However, in this scenario, equation \eqref{equ-91-1} can be reformulated into the form of \eqref{equ-elip} with new choices for \( W \) and \( V \). In other words, the ultra-analyticity of solutions to \eqref{equ-91-1} holds only in this special case, which reduces to an elliptic equation with constant leading coefficients.

Theorem \ref{thm-1} is established using a delicate induction argument, which also provides a similar result for normalized \(L^2(\mathbb{R})\) solutions to the equation:
\begin{align}\label{equ-frac-ellip-D}
|D| u = W(x) u,
\end{align}
where \( W \) is the same as above, and \( |D| \) is the fractional differential operator with the Fourier symbol \( |\xi| \). While the solution to the elliptic equation \( |D| u = W(x) u \) can be viewed as a stationary solution of the parabolic equation:
\begin{align}\label{equ-frac-heat-D}
\partial_t u + |D| u = W(x) u,
\end{align}
there is a fundamental difference between them. For the fractional heat equation \eqref{equ-frac-heat-D}, the solution exhibits Fourier decay at most like \( e^{-t|\xi|} \) (considering the free case where \( W = 0 \)) for generic initial data. Consequently, by the Paley-Wiener theorem, the solution has an analytic radius of \( t \) and cannot achieve the ultra-analyticity present in the elliptic case.

Now we turn to the ultra-analyticity in higher dimensions. % , where \( d \geq 2 \).
\begin{theorem}[Ultra-Analyticity in Higher Dimensions, \(d \geq 2\)]\label{thm-2}  
Let \( p \in [1, \infty] \) and \( u \in L^p(\mathbb{R}^d) \) be a solution to \eqref{equ-elip} with \( \|u\|_{L^p(\mathbb{R}^d)} = 1 \).

\begin{itemize}  
  \item[(i)] If \( p = 1 \), then there exists a constant \( K = K(d) > 0 \), depending only on \( d \), such that  
  \begin{align}\label{equ-ultra-ana-L1}  
  \|\partial_x^\alpha u\|_{L^1(\mathbb{R}^d)} \leq K \left( \kappa C_0 + K \left( \frac{1}{\log \kappa} + 1 \right) \right)^{|\alpha|} \frac{|\alpha|!}{\log^{|\alpha|}(|\alpha| + e)}, \quad \forall \alpha \in \mathbb{N}_0^d.  
  \end{align}
  
  \item[(ii)] If $1<p\leq \infty$, then there exists a constant \( K = K(d, p) > 0 \), depending only on \( d \) and \( p \), such that for any \( \kappa > 1 \), the solution satisfies
  \begin{align}\label{equ-ultra-ana-Lp}  
  \|\partial_x^\alpha u\|_{L^p(\mathbb{R}^d)} \leq \left( \kappa C_0 + K \left( \frac{1}{\log \kappa} + 1 \right) \right)^{|\alpha|} \frac{|\alpha|!}{\log^{|\alpha|}(|\alpha| + e)}, \quad \forall \alpha \in \mathbb{N}_0^d.  
  \end{align}
  
\end{itemize}  
\end{theorem}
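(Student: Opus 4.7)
The plan is to extend the 1D induction of Theorem~\ref{thm-1} to multi-indices $\alpha\in\mathbb{N}_0^d$ by running an induction on $n=|\alpha|$ for $A_n:=\max_{|\alpha|=n}\|\partial^\alpha u\|_{L^p(\mathbb{R}^d)}$, targeting $A_n\leq M^n n!/\log^n(n+e)$ with $M=\kappa C_0+K(1/\log\kappa+1)$; in part~(i) the bound will carry an additional prefactor $K$, reflecting the breakdown of Calder\'on--Zygmund at the $L^1$ endpoint.

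\smallskip

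For part~(ii) with $1<p<\infty$, applying $\partial^\alpha$ to \eqref{equ-elip} and using the Leibniz rule together with \eqref{equ-V} yields
\begin{equation*}
\|\Delta\partial^\alpha u\|_{L^p}\leq \sum_{\beta\leq\alpha}\binom{\alpha}{\beta}C_0^{|\alpha-\beta|}\bigl(d\,A_{|\beta|+1}+A_{|\beta|}\bigr),
\end{equation*}
and the identity $\sum_{|\beta|=k,\,\beta\leq\alpha}\binom{\alpha}{\beta}=\binom{|\alpha|}{k}$ collapses this to a purely 1D-shaped recursion. Combining with the Calder\'on--Zygmund estimate $\|\partial_i\partial_j v\|_{L^p}\leq C_p\|\Delta v\|_{L^p}$ applied to $v=\partial^\alpha u$ produces an inductive inequality for $A_{n+2}$ in terms of $A_0,\ldots,A_{n+1}$ of exactly the same shape as the 1D recursion, with the dimensional factor $d$ absorbed into $K$. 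The delicate induction of Theorem~\ref{thm-1} then transfers essentially verbatim.

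\smallskip

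The endpoint $p=\infty$ is obtained by applying the result just proved at some fixed $p_0\in(d,\infty)$ and invoking the Morrey embedding $W^{1,p_0}(B_1)\hookrightarrow L^\infty(B_1)$ uniformly over unit balls; the order inflation $|\alpha|\mapsto|\alpha|+1$ introduces only a factor polynomial in $|\alpha|$, which is absorbed into the base $M$ via $n\leq C_\varepsilon(1+\varepsilon)^{n}$ at the cost of slightly enlarging $K$. For part~(i) with $p=1$, the Calder\'on--Zygmund step fails; the plan is to proceed by duality against the $L^\infty$ bound on a dual elliptic problem, or equivalently by Riesz-transform boundedness on the Hardy space $H^1$ after verifying that $\Delta\partial^\alpha u$ carries enough cancellation, with a single loss in this duality step accounting for the overall prefactor $K$ in \eqref{equ-ultra-ana-L1}. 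The main obstacle will be the combinatorial bookkeeping required to show that the multi-index Leibniz expansion collapses cleanly to the 1D recursion and that the factor $d$ absorbs into $K$ without spoiling the log-type growth rate in $M$; the $p=1$ endpoint, which cannot be reduced to the 1D result, is a secondary obstacle.
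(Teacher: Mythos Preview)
Your plan for $1<p<\infty$ is correct and is exactly what the paper does: the combinatorial identity you state (equivalently, the Vandermonde-type identity $\sum_{|\beta|=l,\,\beta+\gamma=\alpha}\frac{|\beta|!}{\beta!}\frac{|\gamma|!}{\gamma!}=\frac{|\alpha|!}{\alpha!}$) collapses the multi-index Leibniz sum to the one-dimensional recursion, and the Calder\'on--Zygmund estimate $\sup_{|\alpha|=2}\|D^\alpha u\|_{L^p}\le c_p\|\Delta u\|_{L^p}$ drives the induction.

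The two endpoints, however, have real gaps.

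\textbf{The case $p=\infty$.} Your sentence ``applying the result just proved at some fixed $p_0\in(d,\infty)$'' cannot be executed: an $L^\infty(\mathbb{R}^d)$ solution need not lie in $L^{p_0}(\mathbb{R}^d)$ for any finite $p_0$, so the global $L^{p_0}$ theorem is simply not applicable. The phrase ``uniformly over unit balls'' hints at a local argument, but the result you proved is global and does not localize without additional machinery. The paper handles $p=\infty$ by a genuinely different route: it iterates in H\"older spaces via the Schauder estimate $[u]_{2+\delta}\lesssim_d[\Delta u]_\delta$, proving $[u]_{k+\delta}\le K_2 C^k k!/\log^k(k+e)$ by induction, with a separate interpolation lemma to pass from $[u]_{k+\delta}$ and $[u]_{k-1+\delta}$ back to $[u]_k$. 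A variant of your idea can be made to work if one first extends the $1<p<\infty$ argument to the amalgam spaces $l^\infty L^{p_0}(\mathbb{R}^d)$ using interior (not global) elliptic regularity, and then applies Morrey ball-by-ball; but this amalgam step is the missing ingredient, and the absorption of the extra derivative is more delicate than ``$n\le C_\varepsilon(1+\varepsilon)^n$'' suggests, because the ratio $M'/M$ tends to $1$ as $C_0\to\infty$ for any fixed choice of $K,K'$.

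\textbf{The case $p=1$.} Neither of your two suggestions is viable as stated. The Hardy space route fails at the Leibniz step: $H^1(\mathbb{R}^d)$ is not a module over $L^\infty$, so $\|D^\beta V\cdot D^\gamma u\|_{H^1}$ is not controlled by $\|D^\beta V\|_{L^\infty}\|D^\gamma u\|_{H^1}$, and the recursion breaks. The ``duality against an $L^\infty$ dual problem'' is not specified enough to evaluate, and there is no obvious dual elliptic problem whose $L^\infty$ theory yields $L^1$ derivative bounds for $u$. The paper's approach is to run the induction in the amalgam space $l^1L^{p_0}(\mathbb{R}^d)$ for some $p_0\in(1,\infty)$, where interior $L^{p_0}$ regularity gives a substitute for Calder\'on--Zygmund: $\sup_{|\alpha|=2}\|D^\alpha u\|_{l^1L^{p_0}}\lesssim_{d,p_0}\|\Delta u\|_{l^1L^{p_0}}+\|u\|_{l^1L^{p_0}}$. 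The crucial additional observation is that a normalized $L^1$ solution of \eqref{equ-elip} automatically lies in $l^1L^{p_0}$ with norm $\lesssim_d 1$: one writes $u=G_2\ast(1-\Delta)u$ with the Bessel kernel $G_2$, shows $\|(1-\Delta)u\|_{L^1}\lesssim_d 1$ via the equation and an $L^1$ interpolation inequality for $\nabla u$, and then uses $G_2\in l^1L^{p_0}$ together with Young's inequality in amalgams. The prefactor $K$ in \eqref{equ-ultra-ana-L1} comes precisely from this embedding constant $\|u\|_{l^1L^{p_0}}\lesssim_d 1$, after which one returns to $L^1$ via the trivial inclusion $l^1L^{p_0}\hookrightarrow L^1$.
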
  

Theorem \ref{thm-2} gives similar ultra-analytic bounds in higher dimensions as in the one-dimensional case. However, for some technical reasons,  the proof will be split into three cases, \(1<p<\infty\), \(p=1\), and \(p=\infty\). 
In the case where \(1 < p < \infty\), the proof relies on maximal regularity for elliptic equations in \(L^p(\mathbb{R}^d)\) and involves an induction argument. For the case \(p = 1\), since maximal regularity fails, a different approach is required. Using local maximal regularity in \(L^{p_0}\) for \(1 < p_0 < \infty\), we first establish a similar ultra-analyticity result as in \eqref{equ-ultra-ana-Lp} in $l^1L^{p_0}(\R^d)$, the amalgams of $l^1$ and $L^{p_0}(\R^d)$.
Furthermore, we observe that every normalized \(L^1(\mathbb{R}^d)\) solution to \eqref{equ-elip} also belongs to \(l^1 L^{p_0}(\mathbb{R}^d)\) for some \(p_0 \in (1, \infty)\), with its norm being bounded by a constant depending only on \(d\). By combining these two facts, we deduce \eqref{equ-ultra-ana-L1}. This strategy, however, does not apply to the case \(p = \infty\). Fortunately, every \(L^\infty(\mathbb{R}^d)\) solution to \eqref{equ-elip} must be H\"{o}lder continuous in \(\mathbb{R}^d\) due to the elliptic regularity theory. Thus, we employ Schauder estimates and iterate in H\"{o}lder spaces, and then we obtain \eqref{equ-ultra-ana-Lp}  with \(p=\infty\).

Throughout the paper, we use \(A \lesssim B\) to denote that \(A \leq CB\) for some numerical constant \(C > 0\). If the constant \(C\) depends on variables \(x, y, \ldots\), we write \(A \lesssim_{x,y,\ldots} B\). If both \(A \lesssim B\) and \(B \lesssim A\) hold, we write \(A \sim B\).

\section{one-dimensional case}

In this section, we first give the proof of Theorem \ref{thm-1} and then show the sharpness of it.
\subsection{Proof of Theorem \ref{thm-1}}

Let
\begin{align}\label{equ-s3-1}
a_j=\frac{1}{l!\log^j (j+e)}, \quad 0\leq j\leq n, j+l=n.
\end{align}
We first study the monotonicity of $a_j$.

\begin{lemma}\label{lem-mono}
Let $a_j$ be given by \eqref{equ-s3-1}. Then when $n$ is sufficiently large we have
\begin{align*}
&a_j\leq a_{j+1},  \quad \mbox{ if }n/2< j\leq n-\log n-2, \\
&a_j\geq a_{j+1},  \quad \mbox{ if }j\geq n-\log n +1.
\end{align*}
\end{lemma}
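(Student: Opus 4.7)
The plan is to directly compute the ratio $a_{j+1}/a_j$ and determine its size in each regime. From the definition,
\[
\frac{a_{j+1}}{a_j} = \frac{(n-j)!\,\log^j(j+e)}{(n-j-1)!\,\log^{j+1}(j+1+e)} = \frac{n-j}{\log(j+1+e)}\cdot \left(\frac{\log(j+e)}{\log(j+1+e)}\right)^{\!j}.
\]
So the question reduces to comparing the linear factor $(n-j)/\log(j+1+e)$ against the power factor, which is slightly less than $1$.

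The key technical step is to expand the power factor. Using $\log(j+1+e)-\log(j+e)=\log(1+1/(j+e))=1/(j+e)+O(1/j^2)$, one obtains
\[
\frac{\log(j+e)}{\log(j+1+e)} = 1 - \frac{1}{(j+e)\log(j+1+e)} + O\!\left(\frac{1}{j^2\log(j+1+e)}\right).
\]
Taking the $j$-th power and using $j/(j+e)=1+O(1/n)$ (which is where the hypothesis $j>n/2$ is used, to make this factor close to $1$), I would show
\[
\left(\frac{\log(j+e)}{\log(j+1+e)}\right)^{\!j} = 1 - \frac{1}{\log(j+1+e)} + O\!\left(\frac{1}{\log^2(j+1+e)}\right).
\]
Combining the two pieces gives the asymptotic expression
\[
\frac{a_{j+1}}{a_j} = \frac{n-j}{\log(j+1+e)}\left(1 - \frac{1}{\log(j+1+e)} + O\!\left(\frac{1}{\log^2 n}\right)\right),
\]
valid uniformly for $j>n/2$, since throughout that range $\log(j+1+e)\sim \log n$.

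For the first assertion, when $n/2<j\leq n-\log n-2$ one has $n-j\geq \log n+2$, so $(n-j)/\log(j+1+e)\geq 1+2/\log n+O(1/\log^2 n)$; multiplying by the correction $1-1/\log(j+1+e)+O(1/\log^2 n)$ yields a ratio at least $1+1/\log n+O(1/\log^2 n)\geq 1$ for all sufficiently large $n$. For the second assertion, when $j\geq n-\log n+1$ one has $n-j\leq \log n-1$, and since $j+1+e\geq n-\log n+2+e$ gives $\log(j+1+e)=\log n+O((\log n)/n)$, the linear factor satisfies $(n-j)/\log(j+1+e)\leq 1-1/\log n+o(1/\log n)$; multiplying by the correction factor (also less than $1$) keeps the total strictly below $1$.

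The main obstacle is sharpness: the transition window has width only $3$, and the asymptotic margin between the two regimes is only $O(1/\log n)$, so the expansion of $(\log(j+e)/\log(j+1+e))^j$ must be carried to an accuracy of $o(1/\log n)$, and the estimate $\log(j+1+e)=\log n+O((\log n)/n)$ must be used rather than a cruder bound. Once those expansions are done carefully, the two inequalities follow.
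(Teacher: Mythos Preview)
Your proposal is correct and follows essentially the same route as the paper. The paper rewrites $a_j\le a_{j+1}$ in the equivalent additive form $j+\dfrac{\log^{j+1}(j+1+e)}{\log^{j}(j+e)}\le n$ and then expands the second term as $\log j+1+O(1/\log j)$; you instead keep the ratio $a_{j+1}/a_j$ as a product of the linear factor $(n-j)/\log(j+1+e)$ and the power factor, and expand the latter. The core computation---showing $\bigl(\log(j+e)/\log(j+1+e)\bigr)^{j}=1-1/\log(j+1+e)+O(1/\log^2 n)$ uniformly for $j>n/2$---is identical in both arguments, and the two endpoint checks ($n-j\ge \log n+2$ versus $n-j\le \log n-1$) translate directly between the additive and multiplicative formulations.
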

\begin{proof}
By \eqref{equ-s3-1}, $a_j\leq a_{j+1}$ is equivalent to
\begin{align}\label{equ-s3-2}
j+\frac{\log^{j+1}(j+1+e)}{\log^j(j+e)}\leq n.
\end{align}
We need to compute the asymptotic order of the term in \eqref{equ-s3-2}.
Since
$$
\log \left(1+ \frac{1}{j+e}\right)=\frac{1}{j}+O(\frac{1}{j^2}), \quad
\frac{1}{\log(j+e)}
=\frac{1}{\log j}+O(\frac{1}{j\log^2 j}),
$$
we deduce that
$$
\frac{\log(j+1+e)}{\log(j+e)} = 1+\frac{1}{\log(j+e)}  \log \left(1+ \frac{1}{j+e}\right) = 1+\frac{1}{j\log j}+O(\frac{1}{j^2\log j}).
$$
Thus, we have
\begin{align*}
\left( \frac{\log(j+1+e)}{\log(j+e)} \right)^j&=\exp\left\{j\log \frac{\log(j+1+e)}{\log(j+e)}\right\} =\exp\left\{j\log (1+\frac{1}{j\log j}+O(\frac{1}{j^2\log j}))\right\}\\
&=\exp \left\{j (\frac{1}{j\log j}+ O(\frac{1}{j^2\log j})   \right\}=1+\frac{1}{\log j}+O(\frac{1}{\log^2 j}).
\end{align*}
This, together with $\log(j+1+e)=\log j+O(\frac{1}{j})$, gives that
\begin{align}\label{equ-s3-7}
\log(j+1+e)\left( \frac{\log(j+1+e)}{\log(j+e)} \right)^j=\log j+1+O(\frac{1}{\log j}).
\end{align}
It follows from \eqref{equ-s3-2} and \eqref{equ-s3-7} that $a_j\leq a_{j+1}$
is equivalent to
\begin{align}\label{equ-s3-8}
j+\log j+1+O(\frac{1}{\log j})\leq n.
\end{align}
Now we show that if $n/2<j\leq n-\log n-2$, then \eqref{equ-s3-8} holds.
Noting that $O(\frac{1}{\log j})\to 0$ as $n\to \infty$, \eqref{equ-s3-8} %holds if one can show
%$$
%j+\log j+2\leq n,
%$$
%which 
follows from  $j\leq n-\log n-2$.

Similarly, we show that if  $j\geq n-\log n+1$, then
\begin{align}\label{equ-s3-10}
j+\log j + 1 +O(\frac{1}{\log j})\geq n,
\end{align}
which is equivalent to $a_j\geq a_{j+1}$.
In fact, if $j\geq n-\log n+1$, then
$$
j+\log j + 1 +O(\frac{1}{\log j})\geq j+\log j\geq   n-\log n+1+\log(n-\log n+1)\geq n
$$
if $n$ is sufficiently large. Thus, the proof is complete.
\end{proof}

We also need the following upper bounds of $a_j$ when $j$ is close to $n-\log n$.

\begin{lemma}\label{lem-aj-bound-1}
If $j=n-\log n+s$ with $\frac{|s|}{(\log n)^{\frac{2}{3}}}=O(1)$ as $n\to \infty$, then
$$
a_j\lesssim \frac{n}{ (\log n)^{n+\frac{1}{2}} e^{\frac{3}{2}s^2/\log n}}.
$$
\end{lemma}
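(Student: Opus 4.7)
My plan is to take logarithms of $a_j$ and carry out an asymptotic expansion via Stirling's formula together with Taylor expansions in the small parameters $s/\log n$ and $(\log n)/n$. Writing $L = \log n$, so that $l = L - s$ and $j = n - L + s$,
\[
-\log a_j = \log(l!) + (n-L+s)\log\log(n-L+s+e),
\]
and the target is to show this equals $n\log L - \log n + \tfrac12\log\log n + c\,s^2/L + O(1)$ for a positive absolute constant $c$; exponentiating will then yield the claimed bound.

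First I would apply Stirling, $\log(l!) = (L-s)\log(L-s) - (L-s) + \tfrac12\log(2\pi(L-s)) + O(1/L)$, and Taylor-expand $f(s) = (L-s)\log(L-s)$ around $s=0$ using $f'(0) = -\log L - 1$ and $f''(0) = 1/L$:
\[
(L-s)\log(L-s) = L\log L - s\log L - s + \frac{s^2}{2L} + O\!\left(\frac{s^3}{L^2}\right).
\]
Under the hypothesis $|s| \leq C L^{2/3}$, the cubic remainder is $O(1)$ and the quartic tail is $o(1)$, so $\log(l!) = L\log L - s\log L - L + s^2/(2L) + \tfrac12\log L + O(1)$.

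Second I would expand $\log\log(j+e)$ by writing $\log(j+e) = L + \log(1 - (L-s-e)/n) = L - (L-s-e)/n + O(L^2/n^2)$ and applying $\log$ once more, obtaining
\[
\log\log(j+e) = \log L - \frac{L-s-e}{nL} + O(L/n^2).
\]
Multiplying by $j = n - L + s$, the leading piece $(n-L+s)\log L$ produces $n\log L - L\log L + s\log L$, while the correction contributes $-1 + (s+e)/L + O(L/n)$, all absorbable into $O(1)$ since $s/L = O(L^{-1/3}) = o(1)$.

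Summing the two expansions, the $L\log L$ and $\pm s\log L$ contributions cancel across the two pieces, leaving an asymptotic of the target shape with the quadratic $s^2/L$ coefficient arising entirely from the Stirling expansion. The main bookkeeping obstacle will be to control which error terms remain $O(1)$ versus $o(1)$ at the borderline $|s| \sim L^{2/3}$: in particular, $s^3/L^2$ is precisely of order one there, so it must be carried as an implicit constant rather than a vanishing remainder, and this correction has to be consistently absorbed into the $\lesssim$ at each stage of the accounting.
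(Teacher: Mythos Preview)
Your approach is the same as the paper's: take logarithms, apply Stirling to $l!$, and Taylor-expand $\log\log(j+e)$. The decomposition and the error tracking (including your correct observation that $s^3/L^2$ is merely $O(1)$ at the borderline $|s|\sim L^{2/3}$) match the paper's argument step for step.

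There is, however, a discrepancy in the quadratic coefficient. Your expansion of $(L-s)\log(L-s)$ gives $s^2/(2L)$, so after cancellations you obtain
\[
-\log a_j \;=\; (n+\tfrac12)\log L \;-\; L \;+\; \frac{s^2}{2L}\;+\;O(1),
\]
i.e.\ $a_j \lesssim n\,(\log n)^{-n-\frac12}\,e^{-\frac12 s^2/\log n}$. This is \emph{not} the bound in the lemma, which has $\tfrac32$ in the exponent. Since the exponential sits in the denominator, your bound with $\tfrac12$ is strictly weaker than the stated one with $\tfrac32$, and the gap $e^{s^2/\log n}$ is unbounded over the allowed range $|s|\lesssim(\log n)^{2/3}$.

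The source of the mismatch is in the paper, not in your work: the paper expands $\log(1-x)$ as $-x+\tfrac12 x^2-\tfrac13 x^3+\cdots$, with the wrong sign on the even terms; carrying that sign error through $(L-s)\log(1-s/L)$ produces $\tfrac32$ instead of the correct $\tfrac12$. Your computation is the correct one, and a direct numerical check (e.g.\ $L=100$, $s=20$) confirms $c=\tfrac12$. So the lemma as stated is slightly too strong; the statement that your argument actually proves is
\[
a_j \;\lesssim\; \frac{n}{(\log n)^{n+\frac12}\, e^{\frac12 s^2/\log n}},
\]
and this weaker version is all that is needed in the proof of Lemma~\ref{lem-bootstrap}: in Cases~(3), (4), (5) there one only uses that the Gaussian factor beats a fixed power of $\log n$, for which any positive constant in place of $\tfrac32$ suffices.
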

\begin{proof}
If $j=n-\log n+s$, then $a_j$ can be rewritten as
\begin{align}\label{equ-aj-1}
a_j\leq \frac{1}{(\log n-s)!\log^j (j+e)}<\frac{1}{(\log n-s)!\log^j j}.
\end{align}
We need to estimate the two factors on the right-hand side.

First, by the Stirling formula $n!\sim \sqrt{n}(\frac{n}{e})^n$, we find
$$
(\log n-s)!\sim \sqrt{\log n-s}(\frac{\log n-s}{e})^{\log n-s}\sim \sqrt{\log n}(\frac{\log n}{e})^{\log n-s}( {1-\frac{s}{\log n}} )^{\log n-s}.
$$
Note that when $\frac{|s|}{(\log n)^{\frac{2}{3}}}=O(1)$,
\begin{align*}
( {1-\frac{s}{\log n}} )^{\log n-s}&=e^{(\log n-s)\log(1-\frac{s}{\log n})}=e^{(\log n-s)\Big(-\frac{s}{\log n}+\frac{1}{2}(\frac{s}{\log n})^2-\frac{1}{3}(\frac{s}{\log n})^3+o((\frac{s}{\log n})^3)\Big)}\\
&=e^{-s+\frac{3}{2}\frac{s^2}{\log n}+O(\frac{s^3}{(\log n)^2})}\sim  e^{-s+\frac{3}{2}\frac{s^2}{\log n}},
\end{align*}
so we have
\begin{align}\label{equ-aj-2}
(\log n-s)!\sim \sqrt{\log n}(\frac{\log n}{e})^{\log n-s}e^{-s+\frac{3}{2}\frac{s^2}{\log n}}=
 ( \log n )^{\log n-s+\frac{1}{2}}e^{-\log n+\frac{3}{2}\frac{s^2}{\log n}}.
\end{align}

Similarly, we have
\begin{align*}
\log^j j&=(\log (n-\log n+s))^{n-\log n+s}=\Big(\log n+\log(1-\frac{\log n-s}{n})\Big)^{n-\log n+s}\\
&=\Big(\log n- \frac{\log n-s}{n}+o(\frac{1}{n})\Big)^{n-\log n+s}\\
&=(\log n)^{n-\log n+s}\Big(1- \frac{\log n-s}{n\log n}+o(\frac{1}{n\log n})\Big)^{n-\log n+s}.
\end{align*}
This gives
\begin{align}\label{equ-aj-3}
\log^j j\sim (\log n)^{n-\log n+s}
\end{align}
since
\begin{align*}
\Big(1- \frac{\log n-s}{n\log n}+o(\frac{1}{n\log n})\Big)^{n-\log n+s}&=e^{(n-\log n+s)\log\Big(1- \frac{\log n-s}{n\log n}+o(\frac{1}{n\log n})\Big)}\\
&=e^{(n-\log n+s)\Big(- \frac{\log n-s}{n\log n}+o(\frac{1}{n\log n})\Big)}\sim 1.
\end{align*}
It follows from \eqref{equ-aj-1}-\eqref{equ-aj-3} that
$$
a_j\lesssim \frac{1}{( \log n )^{\log n-s+\frac{1}{2}}e^{-\log n+\frac{3}{2}\frac{s^2}{\log n}}(\log n)^{n-\log n+s}}= \frac{n}{(\log n)^{n+\frac{1}{2}}e^{\frac{3}{2}s^2/\log n}}.
$$
This gives the desired estimate.
\end{proof}

\begin{lemma}\label{lem-aj-bound-2}
If $j=n-L\log n$ with some constant $L>1$, then for $n$ sufficiently large
$$
a_j\lesssim_L \frac{1}{n^{L\log \frac{L}{e}} (\log n)^{n+\frac{1}{2}}}.
$$
\end{lemma}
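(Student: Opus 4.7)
The plan is to follow the same strategy as in Lemma \ref{lem-aj-bound-1}, namely, apply Stirling's formula to the factorial $l! = (L\log n)!$ and expand $\log(j+e)$ around $\log n$, then multiply everything out. Write $l = n - j = L\log n$, so that
\[
a_j = \frac{1}{(L\log n)!\,\log^j(j+e)}.
\]

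First I would handle $(L\log n)!$. Stirling gives
\[
(L\log n)! \sim \sqrt{L\log n}\,\Bigl(\tfrac{L\log n}{e}\Bigr)^{L\log n} = \sqrt{L\log n}\,\Bigl(\tfrac{L}{e}\Bigr)^{L\log n}(\log n)^{L\log n}.
\]
The key observation is that
\[
\Bigl(\tfrac{L}{e}\Bigr)^{L\log n} = e^{L\log n \cdot \log(L/e)} = n^{L\log(L/e)},
\]
which is exactly the factor that appears in the target bound. Since $L$ is a fixed constant, we absorb $\sqrt{L}$ into the $\lesssim_L$.

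Next, for $\log^j(j+e)$ with $j=n-L\log n$, I would expand
\[
\log(j+e)=\log n + \log\!\Bigl(1-\tfrac{L\log n - e}{n}\Bigr) = \log n - \tfrac{L\log n}{n} + O\!\bigl(\tfrac{(\log n)^2}{n^2}\bigr),
\]
so $\log(j+e)/\log n = 1 - L/n + O((\log n)/n^2)$. Raising to the power $j$ and using $j/n \to 1$ gives
\[
\Bigl(\tfrac{\log(j+e)}{\log n}\Bigr)^{j} = \exp\!\bigl(j\log(1-L/n+\cdots)\bigr) = \exp\!\bigl(-jL/n + O(\tfrac{\log n}{n})\bigr) \to e^{-L}.
\]
In particular this factor stays in a compact set depending only on $L$, so
\[
\log^j(j+e) \sim_L (\log n)^{j} = (\log n)^{n-L\log n}.
\]

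Combining the two estimates,
\[
a_j \sim_L \frac{1}{\sqrt{\log n}\,n^{L\log(L/e)} (\log n)^{L\log n}(\log n)^{n-L\log n}} = \frac{1}{n^{L\log(L/e)}(\log n)^{n+\frac{1}{2}}},
\]
which is the desired bound. The proof is essentially a bookkeeping exercise; the only thing to watch out for is that the exponent $L\log(L/e)$ arises precisely from the $(L/e)^{L\log n}$ term in Stirling, which requires the hypothesis $L > 1$ only indirectly (to guarantee that $j = n - L\log n$ lies in the regime where $a_j$ is already decreasing, making this upper bound meaningful in the application). The main obstacle is keeping the asymptotic expansions consistent enough that the $(\log n)^n$ terms cancel exactly, leaving the $(\log n)^{1/2}$ factor needed in the denominator.
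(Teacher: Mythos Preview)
Your proof is correct and follows essentially the same approach as the paper's: apply Stirling's formula to $(L\log n)!$ to extract the factor $n^{L\log(L/e)}$, and expand $\log(j+e)$ around $\log n$ (the paper first bounds $\log(j+e)$ below by $\log j$ and expands $\log\log(n-L\log n)$ instead, but this is cosmetic) to obtain $\log^j(j+e)\sim_L(\log n)^{n-L\log n}$. One small remark: your parenthetical about $L>1$ placing $j$ in the ``decreasing'' regime of $a_j$ is backwards---by Lemma~\ref{lem-mono}, $j=n-L\log n$ with $L>1$ lies in the \emph{increasing} part, which is precisely what is used in the application (Case~(2) of Lemma~\ref{lem-bootstrap}) to bound all such $a_j$ by the value at the right endpoint.
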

\begin{proof}
The strategy is the same as above. First note that
$$
a_j \leq \frac{1}{(L\log n)!\log^jj}.
$$
Using the Stirling formula, we have
$$
(L\log n)!\sim \sqrt{L\log n}(\frac{L\log n}{e})^{L\log n}=\sqrt{L}n^{L\log \frac{L}{e}}(\log n)^{L\log n+\frac{1}{2}}.
$$
Moreover, we have $\log^jj=(\log(n-L\log n))^{n-L\log n}=e^{(n-L\log n)\log\log(n-L\log n)}$ and
\begin{align*}
&\log\log(n-L\log n)=\log (\log n+\log(1-\frac{L\log n}{n}))\\
&= \log (\log n-\frac{L\log n}{n} +o(\frac{L\log n}{n}) )\\
&=\log\log n+ \log(1-\frac{L}{n}+o(\frac{L}{n}))=\log\log n-\frac{L}{n}+o(\frac{L}{n}).
\end{align*}
This implies that
$$
\log^jj=e^{(n-L\log n)\Big(\log\log n-\frac{L}{n}+o(\frac{L}{n}) \Big)}=(\log n)^{n-L\log n}e^{(n-L\log n)\Big( -\frac{L}{n}+o(\frac{L}{n}) \Big)}\gtrsim_L (\log n)^{n-L\log n}.
$$
Combining the two bounds, we obtain the desired estimate.
\end{proof}

In the sequel, we shall use frequently the bound
\begin{align}\label{equ-aj-5}
(\log (n+10))^{n}\lesssim (\log n)^{n}.
\end{align}
Indeed,
\begin{align*}
(\frac{\log(n+10)}{\log n})^n=(1+\frac{\log(1+\frac{10}{n})}{\log n})^n=(1+\frac{1}{\log n}(\frac{10}{n}+o(\frac{10}{n})))^n\sim 1.
\end{align*}

The following lemma is the key step in our proof of the main results.
\begin{lemma}\label{lem-bootstrap}
Let $\kappa>1$ and $C\geq \kappa C_0$. Then  we have
$$
\sum_{l+j=n}\frac{C_0^lC^j}{l!\log^{j}(j+e)}\lesssim  (\frac{1}{\log \kappa}+1) \frac{(n+1) C^{n}}{\log^{n+1}(n+1+e)}, \quad \forall n\in \N.
$$
\end{lemma}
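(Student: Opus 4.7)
The plan is to normalize by $C^n$, analyze the resulting sum as a Poisson-like bump with peak near $l\approx \rho\log n$ (where $\rho:=C_0/C\le 1/\kappa$), bound the main region by comparison with a full exponential series, dispose of the tail by a geometric-decay argument, and finally optimize the resulting bound over $n$ to extract the factor $1+1/\log\kappa$ using $\log\kappa\le\kappa-1$. After dividing by $C^n$, the claim reduces to
\[
S_n\;:=\;\sum_{l=0}^n \frac{\rho^l}{l!\,\log^{n-l}(n-l+e)}\;\lesssim\;\Bigl(1+\frac{1}{\log\kappa}\Bigr)\frac{n+1}{\log^{n+1}(n+1+e)}.
\]
The guiding heuristic is $S_n\sim (n+e)^\rho/\log^n(n+e)$; dividing by the target gives $\log(n+e)/(n+e)^{1-\rho}$, whose maximum over $n\ge 1$ is $\tfrac{1}{e(1-\rho)}\le\tfrac{\kappa}{e(\kappa-1)}\le\tfrac{1}{e}(1+\tfrac{1}{\log\kappa})$, using $\log\kappa\le\kappa-1$.

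For the main region $l\le K\log n$ with $K$ a large absolute constant, I would write $\log(n-l+e)=\log(n+e)+\log(1-l/(n+e))$ and combine $-\log(1-x)\le x/(1-x)$ with $(1-u)^j\ge e^{-ju/(1-u)}$ to obtain, for $n$ large and some absolute $c$,
\[
\frac{1}{\log^{n-l}(n-l+e)}\;\le\;\frac{\exp(cl/\log(n+e))}{\log^{n-l}(n+e)}.
\]
Since $l\le K\log n$ makes the exponential factor bounded by $e^{cK}$, embedding into the full exponential Taylor series yields
\[
\sum_{l\le K\log n} T_l\;\lesssim\;\frac{1}{\log^n(n+e)}\sum_{l=0}^{\infty}\frac{(\rho\log(n+e))^l}{l!}\;=\;\frac{(n+e)^\rho}{\log^n(n+e)},
\]
where $T_l:=\rho^l/(l!\log^{n-l}(n-l+e))$.

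For the tail $l>K\log n$, the ratio satisfies $T_{l+1}/T_l \lesssim \rho\log(n+e)/(l+1)\le 1/(2e)$ once $K\ge 4e$, so the tail decays geometrically and $\sum_{l>K\log n}T_l\lesssim T_{K\log n}$. A direct Stirling bound on $(K\log n)!$, together with $\log^{n-K\log n}(n-K\log n+e)\ge c_K\log^{n-K\log n}(n+e)$ (since $K\log n\ll n$), gives $T_{K\log n}\lesssim n^{K\log(e\rho/K)}/\log^n(n+e)$, which for $K=4e$ and $\rho\le 1$ is at most $n^{-15}/\log^n(n+e)$, dominated by the main estimate. Finitely many small $n$ are absorbed by adjusting constants. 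Combining the main region with the tail and applying the optimization in the first paragraph finishes the proof.

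The principal obstacle is keeping the exponent on $(n+e)$ exactly at $\rho$, and not at $c\rho$ for some $c>1$: the optimization $\max_n \log(n+e)/(n+e)^{1-c\rho}$ diverges as $c\rho\to 1$, i.e.\ $\kappa\to c$, which would destroy the $1/\log\kappa$ behavior in the critical regime $\kappa\to 1^+$ where the statement is most delicate. This dictates the use of the refined Taylor inequality $(1-u)^j\ge e^{-ju/(1-u)}$ rather than a cruder $e^{-2ju}$, and a careful verification that the parasitic factor $(n+e)^{\rho(e^{c/\log(n+e)}-1)}$ is absolutely $O(1)$, since its exponent is only $O(\rho)$ uniformly in $n$.
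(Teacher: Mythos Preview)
Your argument is correct and follows a genuinely different route from the paper's. The paper studies the terms $a_j=1/\bigl(l!\,\log^j(j+e)\bigr)$ directly: it first proves a monotonicity lemma locating the peak near $j=n-\log n$ and two further lemmas giving sharp pointwise bounds on $a_j$ there, then splits $\sum a_j C_0^l C^j$ into five ranges of $j$ and estimates each separately; the factor $1+1/\log\kappa$ appears only in the central range $|j-(n-\log n)|\le(\log n)^{2/3}$, via $C_0^lC^j\le\kappa^{-l}C^n$ with $l\approx\log n$ and the elementary bound $\sup_m m\,\kappa^{-m/2}\lesssim 1/\log\kappa$. You instead normalize by $C^n$, recognize the resulting sum as a perturbed Taylor series of $\exp(\rho\log(n+e))=(n+e)^\rho$, and obtain the $\kappa$-dependence in a single stroke by maximizing $\log x/x^{1-\rho}$ together with $1/(1-\rho)\le\kappa/(\kappa-1)\le 1+1/\log\kappa$. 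Your proof is shorter and more conceptual, replacing three preparatory lemmas and a five-case decomposition with a main/tail split and a one-variable calculus optimization; the paper's method, in exchange, yields finer structural information about where the sum concentrates, though that information is not used elsewhere. Your closing remark about the parasitic factor $(n+e)^{\rho(e^{c/\log(n+e)}-1)}$ is exactly the right point: it is what keeps the exponent at $\rho$ rather than $c\rho$ and preserves the correct blow-up rate as $\kappa\to 1^+$.
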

\begin{proof}
We first consider the case $n\gg 1$ such that Lemma \ref{lem-mono} can be applied.
Let $a_j$ be given by \eqref{equ-s3-1}. We split the sum as
\begin{align*}
\sum_{j+l= n} a_jC_0^lC^j&=  \sum_{j< n/2}+\sum_{\frac{n}{2}\leq j< n-3\log n}+\sum_{n-3\log n\leq j<n-\log n-(\log n)^{\frac{2}{3}}}\\
&\quad +\sum_{n-\log n-(\log n)^{\frac{2}{3}} \leq j\leq n-\log n-(\log n)^{\frac{2}{3}}}+\sum_{n-\log n+(\log n)^{\frac{2}{3}}<j\leq n}.
\end{align*}

{\bf (1) The case $j< n/2$.} Note that
\begin{align*}
\sum_{j<\frac{n}{2}}a_jC_0^lC^j=\sum_{j<\frac{n}{2}}\frac{C_0^lC^j}{l!\log^j (j+e)}\leq \sum_{j<\frac{n}{2}}\frac{C_0^lC^j}{\frac{n}{2}!}<n\frac{C^n}{\frac{n}{2}!}\lesssim \frac{nC^n}{\log^{n+1}(n+1+e)}
\end{align*}
since by the Stirling formula $n!\sim \sqrt{n}(n/e)^n$, when $n\to \infty$,
\begin{align*}
\frac{\log^{n+1} (n+1+e)}{\frac{n}{2}!}\lesssim \frac{e^{(n+1)\log\log(n+1+e)}}{\sqrt{\frac{n}{2}}\left(\frac{n}{2e}\right)^{n/2}}\lesssim 1.
\end{align*}

{\bf (2) The case $n/2\leq j< n-3\log n$.} In this case, by Lemma \ref{lem-mono} we see $a_j\leq a_{n-L\log n}$ with $L=3$. Thus, by Lemma \ref{lem-aj-bound-2} and noting $C_0<C$, we find
\begin{align*}
\sum_{n/2\leq j<n-3\log n}a_jC_0^lC^j&\lesssim \sum_{n/2\leq j<n-3\log n } C^na_{n-3\log n}\\
&\lesssim \sharp\{n/2\leq j<n-3\log n\}\frac{C^n}{n^{3\log \frac{3}{e}} (\log n)^{n+\frac{1}{2}}}\\
&\lesssim \frac{nC^n}{n^{3\log \frac{3}{e}}  \log^{n+\frac{1}{2}}( n+1+e)}  \lesssim \frac{nC^n}{\log^{n+1}(n+1+e)}.
\end{align*}
Here in the first inequality of last line, we used \eqref{equ-aj-5}.

{\bf (3) The case $n-3\log n\leq j<n-\log n-(\log n)^{\frac{2}{3}}$.} In this case, the idea is the same as Case (2). In fact, by Lemma \ref{lem-mono} we see $a_j\leq a_{n-\log n+s}$ with $s=-(\log n)^{\frac{2}{3}}$. Then by Lemma \ref{lem-aj-bound-1}, we infer that
\begin{align*}
&\sum_{n-3\log n\leq j<n-\log n-(\log n)^{\frac{2}{3}}}a_jC_0^lC^j\lesssim \sum_{n-3\log n\leq j<n-\log n-(\log n)^{\frac{2}{3}}} C^n \frac{n}{ (\log n)^{n+\frac{1}{2}}e^{\frac{3}{2}s^2/\log n}}\\
&\leq \sharp \{n-3\log n\leq j<n-\log n-(\log n)^{\frac{2}{3}}\} \frac{n C^n}{(\log n)^{n+\frac{1}{2}}e^{\frac{3}{2}(\log n)^{\frac{1}{3}}}}\\
&\lesssim \frac{nC^n\log n}{ \log^{n+\frac{1}{2}}( n+1+e)e^{\frac{3}{2}(\log n)^{\frac{1}{3}}}} \lesssim \frac{nC^n}{\log^{n+1}(n+1+e)}.
\end{align*}

{\bf (4) The case $n-\log n-(\log n)^{\frac{2}{3}} \leq j\leq n-\log n+(\log n)^{\frac{2}{3}}$.}
By Lemma \ref{lem-mono} we have $a_j\lesssim a_{n-\log n+s}$ with $s=O(1)$. Then by Lemma \ref{lem-aj-bound-1}, we find $a_j\lesssim \frac{n}{ (\log n)^{n+\frac{1}{2}} }$. Moreover, we exploit the fact that $C\geq \kappa C_0$ and deduce that
\begin{align*}
\sum_{n-\log n-(\log n)^{\frac{2}{3}}\leq j<n-\log n+(\log n)^{\frac{2}{3}}}a_jC_0^lC^j&\lesssim \sum_{n-\log n-(\log n)^{\frac{2}{3}}\leq j<n-\log n+(\log n)^{\frac{2}{3}}} \kappa^{-l}C^n \frac{n}{ (\log n)^{n+\frac{1}{2}}}\\
&\lesssim (\log n)^{\frac{2}{3}} \kappa^{-(\log n-(\log n)^{\frac{2}{3}})} \frac{n C^n}{(\log n)^{n+\frac{1}{2}}}\\
&\lesssim (\log n)^{\frac{2}{3}+\frac{1}{2}} \kappa^{-(\log n-(\log n)^{\frac{2}{3}})} \frac{n C^n}{(\log n)^{n+1}}\\
&\lesssim   (\frac{1}{\log \kappa}+1) \frac{nC^n}{\log^{n+1}(n+1+e)},
\end{align*}
since for $n$ sufficiently large, noting $\kappa>1$,
\begin{align*}
(\log n)^{\frac{2}{3}+\frac{1}{2}} \kappa^{-(\log n-(\log n)^{\frac{2}{3}})}\lesssim  (\log n) \kappa^{-\frac{1}{2}\log n}
\left\{
\begin{array}{ll}
 \lesssim \frac{1}{\log \kappa}, & \mbox{ if } \kappa <e,\\
 \lesssim 1, & \mbox{ if } \kappa \geq e.
\end{array}
\right.
\end{align*}

{\bf (5) The case $n-\log n+(\log n)^{\frac{2}{3}}<j\leq n$.} In this case, by Lemmas \ref{lem-aj-bound-1} and \ref{lem-aj-bound-2}, we find $a_j\leq a_{n-\log n+(\log n)^{\frac{2}{3}}}\lesssim \frac{n }{(\log n)^{n+\frac{1}{2}}e^{\frac{3}{2}(\log n)^{\frac{1}{3}}}}$. Thus,
\begin{align*}
\sum_{n-\log n+(\log n)^{\frac{2}{3}}\leq n}a_jC_0^lC^j \lesssim \log n \frac{nC^n }{(\log n)^{n+\frac{1}{2}}e^{\frac{3}{2}(\log n)^{\frac{1}{3}}}} \lesssim \frac{nC^n}{\log^{n+1}(n+1+e)}.
\end{align*}

Finally, combining the five cases together, we obtain
$$
\sum_{l+j=n}\frac{C_0^lC^j}{l!\log^{j}(j+e)}\lesssim (\frac{1}{\log \kappa}+1)\frac{(n+1)C^n}{\log^{n+1}(n+1+e)}
$$
for $n$ sufficiently large, say $n> n_0$, where $n_0$ is a numerical constant.

If $n\leq n_0$,   setting $C'=\log^{n_0+1}(n_0+1+e)$, then
$$
\sum_{l+j=n}\frac{C_0^lC^j}{l!\log^{j}(j+e)}\leq \sum_{l+j=n}C^n=(n+1)C^n\leq C' \frac{(n+1)C^n}{\log^{n+1}(n+1+e)}.
$$
%holds for all $n\leq n_0$. 
This completes the proof.
\end{proof}
%
%\begin{remark}\label{rem-boot}
%From the proof we have the following stronger result, for every $A>0$,
%$$
%\sum_{l+j=n}\frac{C_0^lC^j}{l!\log^{j}(j+e)}\lesssim_A (\log (n+e))^{-A}\frac{nC_\kappa C^{n}}{\log^{n+1}(n+1+e)}.
%$$
%In fact, the additional log-decay  $(\log (n+e))^{-A}$ can be obtained from the analysis in cases (1)-(5) when $n$ is large.
%This bound will be used in the proof of ultra-analyticity of $L^\infty(\R^d)$ solution to the elliptic equation, see Section 3.
%\end{remark}

\begin{proof}[Proof of Theorem \ref{thm-1}]
%\begin{proof}
%Thanks to Lemma \ref{lem-bootstrap}, there exists a $n_0$ such that the following holds: For every $\kappa>1$, there exists a constant $C_\kappa>0$ such that for all $n\geq n_0$,
%\begin{align}\label{equ-proof-d1-1}
%\sum_{l+j=n}\frac{C_0^lC^j}{l!\log^{j}(j+e)}\leq \frac{nC_\kappa C^{n}}{\log^{n+1}(n+1+e)}
%\end{align}
%
%
%On the other hand, let $u$ be a solution to $\partial_x^2u=W(x)\partial_x+V(x)u$. Then it is easy to see that for every $k>0$, there exists a constant $C(k)>0$ such that $\|\partial_x^{k+1}u\|_{L^\infty(\R)}\leq C(k)(\|\partial_x^{k}u\|_{L^\infty(\R)}+\|\partial_x^{k-1}u\|_{L^\infty(\R)})$. Thus a bootstrapping argument shows that for every $k\in \N$, $\|\partial_x^{k}u\|_{L^\infty(\R)}\leq C(k)$. In particular, this implies that
%\begin{align}\label{equ-proof-d1-2}
% \|\partial_x^{k}u\|_{L^\infty(\R)}\leq \frac{C_1^kk!}{\log^k(k+e)}, \quad \forall k\leq n_0+1
%\end{align}
%for some constant $C_1>0$.

Let $p\in[1,\infty]$ and $\kappa>1$. It follows from Lemma \ref{lem-bootstrap}  that
\begin{align}\label{equ-boot-d1-810}
\sum_{l+j=n}\frac{C_0^lC^j}{l!\log^{j}(j+e)}\leq K_1(\frac{1}{\log \kappa}+1) \frac{(n+1) C^{n}}{\log^{n+1}(n+1+e)}, \quad \forall n\in \N,
\end{align}
where $K_1>0$ is a numerical constant.  We can assume $K_1\geq 1$.

Let $u$ be a solution of \eqref{equ-elip} with $\|u\|_{L^p(\R)}=1$. We claim that there exist a constant $K_2=K_2(p)>0$ depending only on $p$ such that
\begin{align}\label{equ-proof-d1-3}
 \|\partial_x^{k}u\|_{L^p(\R)}\leq \frac{C^kk!}{\log^k(k+e)}, \quad \forall k\in \N,
\end{align}
where $C=\kappa C_0+(7K_1+K_2)(\frac{1}{\log \kappa}+1)>C_0+1$.

We prove \eqref{equ-proof-d1-3} by induction. Assuming that \eqref{equ-proof-d1-3} holds for all $k\leq n$ with some $n\geq 2$, we consider the case $k=n+1$. Using the equation $\partial_x^2u=W(x)\partial_xu+V(x)u$, we have
\begin{align*}
&\|\partial_x^{n+1}u\|_{L^p(\R)}=\|\partial_x^{n-1}(W(x)\partial_xu+V(x)u)\|_{L^p(\R)}\\
&\leq \sum_{l+j=n-1}\frac{(n-1)!}{l!j!}\|\partial_x^lW\|_{L^\infty(\R)}\|\partial_x^{j+1}u\|_{L^p(\R)} + \sum_{l+j=n-1}\frac{(n-1)!}{l!j!}\|\partial_x^lV\|_{L^\infty(\R)}\|\partial_x^{j}u\|_{L^p(\R)}\\
&\leq \sum_{l+j=n}\frac{n!}{l!j!}\|\partial_x^lW\|_{L^\infty(\R)}\|\partial_x^{j}u\|_{L^p(\R)} + \sum_{l+j=n-1}\frac{(n-1)!}{l!j!}\|\partial_x^lV\|_{L^\infty(\R)}\|\partial_x^{j}u\|_{L^p(\R)}={\rm I}+{\rm II}.
\end{align*}
Using the induction hypothesis \eqref{equ-proof-d1-3} with $k\leq n$, we find
\begin{align*}
{\rm I}&\leq \sum_{l+j=n}\frac{n!}{l!j!}C_0^l\frac{ C^jj!}{\log^j(j+e)}=\sum_{l+j=n} C_0^l C^j \frac{n!}{l!\log^j(j+e) }\leq  K_1(\frac{1}{\log \kappa}+1) \frac{(n+1)!  C^{n}}{\log^{n+1}(n+1+e)}
\end{align*}
by \eqref{equ-boot-d1-810}. Similarly, noting $C>1$,
\begin{align*}
{\rm II}&\leq \sum_{l+j=n-1} C_0^l C^j \frac{(n-1)!}{l!\log^j(j+e) } \leq   K_1(\frac{1}{\log \kappa}+1)\frac{n! C^{n-1}}{\log^{n}(n+e)} \\
&\leq  K_1(\frac{1}{\log \kappa}+1)\frac{ (n+1)!C^{n}}{\log^{n+1}(n+1+e)}d_n,
\end{align*}
where we used $d_n=\frac{\log(n+1+e)}{n+1}(\frac{\log(n+1+e)}{\log(n+e)})^{n}<6$ for all $n\geq 2$. In fact,
\begin{align*}
(\frac{\log(n+1+e)}{\log(n+e)})^{n}&=(1+\frac{1}{\log(n+e)}\log(1+\frac{1}{n+e}))^{n}\leq (1+\frac{1}{\log(n+e)} \frac{1}{n+e} )^{n}\\
&=e^{n\log(1+\frac{1}{\log(n+e)} \frac{1}{n+e} )}\leq e^{n \frac{1}{\log(n+e)} \frac{1}{n+e}  }<3
\end{align*}
and $\frac{\log(n+1+e)}{n+1}<2$ for all $n\geq2$.

Combining the bounds of ${\rm I}$ and ${\rm II}$, we obtain
$$
 \|\partial_x^{n+1}u\|_{L^p(\R)}\leq 7K_1(\frac{1}{\log \kappa}+1)\frac{ (n+1)!C^{n}}{\log^{n+1}(n+1+e)}.
$$
Thus, \eqref{equ-proof-d1-3} is valid with $k=n+1$.

To complete the proof, we show that \eqref{equ-proof-d1-3} holds with $k\leq 2$. In fact, since $\|u\|_{L^p(\R)}=1$ and $\|W\|_{L^\infty(\R)}, \|V\|_{L^\infty(\R)}\leq 1$, from the equation \eqref{equ-elip} we have
\begin{align}\label{equ-1d-89-1}
\|\partial_x^2u\|_{L^p(\R)}\leq \|\partial_x u
\|_{L^p(\R)}+1.
\end{align}
Moreover, there exists a constant $c_p\geq 1$ depending only on $p$ such that
\begin{align}\label{equ-1d-89-2}
\|\partial_xu\|_{L^p(\R)}\leq c_p\|\partial^2_xu\|^{1/2}_{L^p(\R)}\|u\|^{1/2}_{L^p(\R)}=c_p\|\partial^2_xu\|^{1/2}_{L^p(\R)};
\end{align}
see the interpolation inequality in \cite{AdamsSobolev} for $1\leq p<\infty$ and the Landau inequality if $p=\infty$ (one can take  $c_p=2$ in this case). It follows from \eqref{equ-1d-89-1}-\eqref{equ-1d-89-2} that
\begin{align}\label{equ-1d-89-3}
\|\partial_x^2u\|_{L^p(\R)}\leq  c_p \|\partial_x^2u\|^{1/2}_{L^p(\R)}+1.
\end{align}
Since 
$$
c_p\|\partial_x^2u\|^{1/2}_{L^p(\R)}\leq \frac{1}{2} \|\partial_x^2u\|_{L^p(\R)}+\frac{1}{2}c_p^2,
$$ 
we deduce from \eqref{equ-1d-89-3} that
$$
\|\partial_x^2u\|_{L^p(\R)} \leq 2+c_p^2.
$$
By \eqref{equ-1d-89-2} again, we have $\|\partial_xu\|_{L^p(\R)}\leq  c_p (2+c_p^2)^{1/2}$. Thus, \eqref{equ-proof-d1-3} holds for $k\leq 2$ with $K_2=2(2+c_p^2)$.

Finally, letting $K=7K_1+K_2$, we obtain \eqref{equ-ultra-ana}. This ends the proof.    
\end{proof}

% In particular letting $V=0$ in Theorem \ref{thm-1} we find that if $u$ is a bounded solution to $\partial_x^2u=W(x)\partial_xu$ with the same derivative bounds of $W$, then we obtain a log-type ultra-analytic estimate as above. Changing variable $\partial_xu\mapsto u$ we obtain the following result. {\color{red}Here $u=\partial_x v$, but $v$ may not be in $L^p$.}

Similarly to the approach used in the proof of Theorem \ref{thm-1}, we can demonstrate the following result.
\begin{corollary}\label{cor-95-1}
Let $p\in[1,\infty]$ and $u\in L^p(\R)$, with $\|u\|_{L^p(\R)}=1$, be a solution to $\partial_x u =W(x)u$ with $W$ satisfying
$$
\|\partial_x^jW\|_{L^\infty(\R)}\leq C_0^j, \quad \forall j\in \N.
$$
Then there exists a constant $K=K(p)>0$ depending only on $p$ such that, for any $\kappa>1$,
the solution satisfies the log-type ultra-analytic upper bound
\begin{align}\label{equ-ultra-ana-coro}
\|\partial_x^n u\|_{L^p(\R)}\leq \Big( \kappa C_0+K(\frac{1}{\log \kappa}+1)\Big)^n\frac{ n!}{\log^{n}(n+e)}, \quad \forall n\in \N.
\end{align}
\end{corollary}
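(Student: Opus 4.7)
\textbf{Proof proposal for Corollary \ref{cor-95-1}.} The plan is to run the same induction that drives Theorem \ref{thm-1}, streamlined by the fact that $\partial_x u = W(x)u$ is first-order. In particular, the interpolation step used in Theorem \ref{thm-1} to bound $\|\partial_x u\|_{L^p}$ and $\|\partial_x^2 u\|_{L^p}$ is unnecessary here; the normalization $\|u\|_{L^p(\R)} = 1$ itself supplies the $n=0$ base case.

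Fix $\kappa > 1$ and let $K_1 \geq 1$ denote the numerical constant produced by Lemma \ref{lem-bootstrap}. Set $C = \kappa C_0 + K_1(1/\log\kappa + 1)$, so in particular $C \geq \kappa C_0$, and Lemma \ref{lem-bootstrap} yields
\begin{align*}
\sum_{l+j=n}\frac{C_0^l C^j}{l!\log^j(j+e)} \leq K_1\Big(\frac{1}{\log\kappa}+1\Big)\frac{(n+1)\,C^n}{\log^{n+1}(n+1+e)}, \quad \forall n \in \N.
\end{align*}
The induction target is $\|\partial_x^n u\|_{L^p(\R)} \leq C^n n!/\log^n(n+e)$ for all $n \in \N$.

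For the inductive step, differentiating $\partial_x u = Wu$ a total of $n$ times and applying the Leibniz rule gives $\partial_x^{n+1} u = \sum_{l+j=n}\binom{n}{l}(\partial_x^l W)(\partial_x^j u)$. Taking $L^p$ norms, using $\|\partial_x^l W\|_{L^\infty}\leq C_0^l$ together with the inductive hypothesis, and factoring $n!$ out of the combinatorial coefficient leads, via the displayed bootstrap estimate, to
\begin{align*}
\|\partial_x^{n+1} u\|_{L^p(\R)} \leq K_1\Big(\frac{1}{\log\kappa}+1\Big)\frac{(n+1)!\,C^n}{\log^{n+1}(n+1+e)} \leq \frac{(n+1)!\,C^{n+1}}{\log^{n+1}(n+1+e)},
\end{align*}
where the last inequality uses $C \geq K_1(1/\log\kappa + 1)$. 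The induction closes with $K = K_1$.

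I do not expect any genuine obstacle. Compared with Theorem \ref{thm-1}, the argument loses both the zero-order contribution (the term ${\rm II}$ there) and the low-order interpolation bootstrap \eqref{equ-1d-89-1}--\eqref{equ-1d-89-3}, so only the clean version of the ${\rm I}$-estimate survives. The only point requiring care is choosing $K$ large enough that the factor $K_1(1/\log\kappa + 1)$ coming out of Lemma \ref{lem-bootstrap} is absorbed in a single step; the choice $K = K_1$ already achieves this.
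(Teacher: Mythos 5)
Your argument is correct and is exactly what the paper intends when it says Corollary~\ref{cor-95-1} follows ``similarly'' to Theorem~\ref{thm-1}: the same Leibniz-plus-Lemma~\ref{lem-bootstrap} induction, with the simplifications you identify (no term ${\rm II}$, no interpolation bootstrap for the base case since $n=0$ is settled by the normalization). As a minor remark, the constant you obtain is in fact numerical rather than $p$-dependent, which is a slightly stronger conclusion than the corollary's statement; this is because for a first-order equation the $p$-dependent constant $c_p$ from the interpolation step \eqref{equ-1d-89-2} never enters.
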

% \begin{proof}
% Let $u\in L^p(\R)$ be a solution of $\partial_x u =W(x)u$ with $\|u\|_{L^p(\R)}=1$. Taking the derivative on both sides of the equation shows that $u$ is also a solution to 
% $$
% \partial_x^2u=W\partial_xu+u\partial_xW.
% $$
% Thus we can apply Theorem \ref{thm-1} with $V=\partial_xW$ to conclude the desired estimate \eqref{equ-ultra-ana-coro}.
% \end{proof}

% Note that Corollary \ref{cor-95-1} still holds if the  assumption $\|\partial_x^{j+1}W\|_{L^\infty(\R)}\leq C_0^j(j\in \N)$ is removed. In fact, this can be proved by the same strategy as Theorem \ref{thm-1}. 

We note that similar results apply to the normalized \( L^2(\mathbb{R}) \) solutions of \( |D| u = W(x) u \). Specifically, by Plancherel's theorem, we have \( \|\partial_x^{n+1} u\|_{L^2(\mathbb{R})} = \|\partial_x^n |D| u\|_{L^2(\mathbb{R})} \) for all \( n \in \mathbb{N}_0 \). Consequently, the same induction argument yields the ultra-analytic upper bound \eqref{equ-ultra-ana-coro} with \( p = 2 \). This highlights a fundamental difference between the ultra-analyticity of the elliptic equation \( |D| u = W u \) and the corresponding parabolic equation \( \partial_t u + |D| u = W u \). As mentioned in the introduction, the solution of the latter cannot be ultra-analytic for any \( t > 0 \).

\subsection{Sharpness of  Theorem \ref{thm-1}}

In this subsection, we provide an example demonstrating that Theorem \ref{thm-1} is sharp in two respects. The main result is presented in the following proposition. It shows that the ultra-analytic estimate \eqref{equ-ultra-ana} cannot be improved to
$$
\sup_{x \in \mathbb{R}^d} |\partial_x^\alpha u(x)| \leq C^\alpha (\alpha!)^{1 - \varepsilon}, \quad \forall \alpha \in \mathbb{N}_0^d,
$$
for any \( \varepsilon > 0 \). Specifically, this indicates that generic solutions to the elliptic equation \eqref{equ-elip} with coefficients of exponential type exhibit weaker ultra-analytic properties compared to the eigenfunctions of the harmonic oscillator.

\begin{proposition}\label{prop-sharp}
There is a solution $u\in L^\infty(\R)$ satisfying $ \|u\|_{L^\infty(\R)}=1$ of $\partial_x^2u=W(x)\partial_xu+V(x)u$ with $W,V$ satisfying
\begin{align}\label{equ-sharp-1}
\sup_{x\in \R}|\partial_x^n W(x)|+\sup_{x\in \R}|\partial_x^n V(x)|\leq C_0^n, \quad \forall n\in \N
\end{align}
for some constant $C_0>0$, such that \eqref{equ-ultra-ana} holds with $p=\infty$ and the following statements hold true.
\begin{itemize}
  \item [(1)] For every $C>0, \lambda>1$, the solution $u$ does not satisfy
\begin{align}\label{equ-sharp-2}
\|\partial_x^n u\|_{L^\infty(\R)} \leq \frac{C^nn!}{\log^{\lambda n}(n+e)}, \quad \forall n\in \N.
\end{align}
  \item [(2)] For every $0<\kappa<1, C>0$, the  solution $u$ does not satisfy
\begin{align}\label{equ-sharp-3}
\|\partial_x^n u\|_{L^\infty(\R)} \leq (\kappa C_0+C)^n\frac{ n!}{\log^{n}(n+e)}, \quad \forall \alpha\in \N
\end{align}
 when $C_0$ is sufficiently large.
\end{itemize}
\end{proposition}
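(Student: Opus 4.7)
My plan is to exhibit one explicit family of solutions $u = u_{C_0}$, derive a lower bound on $\|\partial_x^n u\|_{L^\infty}$ which, when compared with Theorem \ref{thm-1}, rules out both \eqref{equ-sharp-2} and \eqref{equ-sharp-3}. For the construction I would set, for $C_0 \geq 1$,
\[
u(x) = \exp\!\bigl(i\sin(C_0 x)/C_0^2\bigr), \quad W(x) = i\cos(C_0 x)/C_0, \quad V(x) = -i\sin(C_0 x).
\]
A direct computation gives $\partial_x u = Wu$ and $\partial_x^2 u = W' u + W^2 u = V u + W \partial_x u$, so $u$ solves the equation; plainly $|u| \equiv 1$ and $|\partial_x^n W|, |\partial_x^n V| \leq C_0^n$ for all $n \in \N$, so \eqref{equ-sharp-1} holds. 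The upper bound \eqref{equ-ultra-ana} with $p = \infty$ is therefore just Theorem \ref{thm-1}.

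Next, the Jacobi--Anger identity yields
\[
\partial_x^n u(x) = \sum_{k \in \Z} (ikC_0)^n J_k(1/C_0^2) e^{ikC_0 x},
\]
so comparing with the Fourier coefficients of this $(2\pi/C_0)$-periodic function gives $\|\partial_x^n u\|_{L^\infty(\R)} \geq (kC_0)^n |J_k(1/C_0^2)|$ for every $k \geq 1$. The Taylor series for $J_k$ at the origin yields $|J_k(1/C_0^2)| \geq (1/(2C_0^2))^k/(2\, k!)$ for $k \geq 2$ and $C_0 \geq 1$. Choosing $k = k^*(n)$ to be the integer nearest the saddle-point solution of $k\log(2kC_0^2) = n$, namely $k^*(n) \sim n/\log n$ as $n \to \infty$, a Stirling calculation then produces
\[
\|\partial_x^n u\|_{L^\infty(\R)} \geq C_0^n \cdot \frac{n!}{\log^n(n+e)} \cdot h_{C_0}(n),
\]
where $h_{C_0}(n)^{1/n} \to 1$ as $n \to \infty$.

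Given this lower bound, the two sharpness assertions are immediate. For (1), the ratio of the lower bound to $C^n n!/\log^{\lambda n}(n+e)$ is at least $(C_0/C)^n \log^{(\lambda-1)n}(n+e) \cdot h_{C_0}(n)$, which diverges whenever $\lambda > 1$, so \eqref{equ-sharp-2} cannot hold. For (2), taking $C_0$ large enough that $(1-\kappa)C_0 > C$ gives $C_0/(\kappa C_0 + C) > 1$, so the ratio of the lower bound to $(\kappa C_0 + C)^n n!/\log^n(n+e)$ is at least $(C_0/(\kappa C_0+ C))^n h_{C_0}(n) \to \infty$, ruling out \eqref{equ-sharp-3}.

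The main obstacle is the saddle-point analysis yielding $h_{C_0}(n)^{1/n} \to 1$. This limit, rather than some constant strictly less than $1$, is essential for (2), because $\kappa$ may be taken arbitrarily close to $1$ and the base of the lower bound must genuinely approach $C_0$. Fortunately, the argument reduces to verifying that the factors $(2C_0^2)^{k^*}$, the Stirling correction to $k^*!$, and the polynomial prefactors all contribute only sub-exponentially in $n$ at $k^* \sim n/\log n$, which is routine once the saddle is identified.
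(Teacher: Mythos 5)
Your construction and lower-bound argument are correct, but they take a genuinely different route from the paper. The paper uses $u(x)=e^{-A}e^{Ae^{iC_0 x}}$ with $A=(1+C_0^2)^{-1}$, extends $u$ analytically to $\mathbb C$, and argues by contradiction: if \eqref{equ-sharp-2} (resp.\ \eqref{equ-sharp-3}) held, the Taylor series at $0$ would force $|u(iy)|$ to grow no faster than $e^{e^{(C''|y|)^{1/\lambda}}}$ (resp.\ $e^{C_\varepsilon e^{((1-\varepsilon)C_0+3C)|y|}}$) as $y\to-\infty$, which contradicts the explicit double-exponential growth $|u(iy)|\sim e^{Ae^{C_0|y|}}$. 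You instead take $u(x)=\exp(i\sin(C_0 x)/C_0^2)$, use periodicity and the Jacobi--Anger expansion to read off its $(2\pi/C_0)$-periodic Fourier coefficients $J_k(1/C_0^2)$, and obtain a \emph{direct} lower bound $\|\partial_x^n u\|_{L^\infty}\ge (kC_0)^n|J_k(1/C_0^2)|$ for every $k$, which you then optimize over $k$ via a saddle-point at $k^*\sim n/\log n$. Your plan is sound: the Bessel lower bound follows from the alternating Taylor series of $J_k$, the saddle equation $k\log(2kC_0^2)=n$ is what you get from differentiating the log of $(kC_0)^n/[(2C_0^2)^k k!]$, and a careful Stirling expansion at $k=k^*$ indeed gives $\|\partial_x^n u\|_\infty\ge C_0^n\,n!\,\log^{-n}(n+e)\cdot h_{C_0}(n)$ with $h_{C_0}(n)^{1/n}\to1$ (in fact $h_{C_0}(n)\approx e^{(n/\log n)\log\log n}\to\infty$), which is exactly what (1) and (2) need. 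The two approaches are dual: the paper infers a contradiction from the \emph{upper} bound that hypothetical derivative estimates impose on the analytic extension, while you compute the derivatives \emph{from below} using Fourier modes; your route avoids complex analysis and a contradiction structure, but shifts the technical burden into the saddle-point asymptotics, which your write-up sketches rather than carries out. (The same periodic-Fourier idea would also work for the paper's $e^{Ae^{iC_0x}}$, whose Fourier coefficients are $e^{-A}A^k/k!$.) One small point shared by your construction and the paper's proof: each of $\|\partial_x^n W\|_\infty$ and $\|\partial_x^n V\|_\infty$ is individually at most $C_0^n$, but the \emph{sum} in \eqref{equ-sharp-1} slightly exceeds $C_0^n$; a harmless rescaling of $W,V$ (or of $C_0$) fixes this.
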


\begin{proof}
For every $C_0>0$, let
$$
u(x)=e^{-A}e^{\phi(x)}, \quad \phi(x)=Ae^{iC_0x}, x\in \R
$$
and $A=\frac{1}{1+C_0^2}$.
It is easy to check that $\|u\|_{L^\infty(\R)}=1$ and $\partial_x^2u=W(x)\partial_xu+V(x)u$ holds with $W=\phi'$ and $V=\phi''$. Since $\|\partial_x^n\phi\|_{L^\infty(\R)}\leq AC_0^{n}$, we find
$$
\|\partial_x^n W\|_{L^\infty(\R)},\, \|\partial_x^n V\|_{L^\infty(\R)}\leq C_0^{n}
$$
for all $n\in N$ due to our choice of $A$.

Next, we check that the function $u$ satisfies that for every $\kappa>1$, there exists a constant $K$ such that
\begin{align}\label{equ-sharp-4}
\|\partial_x^n u\|_{L^\infty(\R)}\leq  \Big( \kappa C_0+K(\frac{1}{\log \kappa}+1)\Big)^n\frac{ n!}{\log^{n}(n+e)}, \quad \forall n\in \N.
\end{align}
Note  that \eqref{equ-sharp-4} follows directly from Theorem \ref{thm-1}. Nevertheless, we shall give a direct proof here, which is of independent interest. Indeed, it is easy to check that 
$$
\|u'\|_{L^\infty(\R)}\leq \|\phi'\|_{L^\infty{(\R)}}\leq 1\quad 
\text{and}\quad 
\|u''\|_{L^\infty(\R)}\leq \|\phi''+(\phi')^2\|_{L^\infty(\R)}\leq 2,
$$ 
thus \eqref{equ-sharp-4} holds for $n\leq 2$. Now we assume $n\geq 3$.  Extend $u(x)$ to an analytic function on the complex plane $\C$, namely
$u(z)=e^{-A}e^{Ae^{iC_0z}}, z\in \C.$
By the Cauchy formula,
$$
u^{(n)}(x)=\frac{n!}{2\pi i}\int_{\mathcal {C}} \frac{u(z)}{(z-x)^{n+1}}\d z, \quad x\in \R,
$$
where $\mathcal {C}$ is the circle $\{z\in\C: |z-x|=r\}$ and $r>0$ is to be determined later. Writing $z=x+re^{i\theta}$, then
\begin{align*}
|u^{(n)}(x)|\leq \frac{n!}{2\pi}\left|\int_{\mathcal {C}} \frac{u(z)}{(z-x)^{n+1}}\d z\right|\leq \frac{n!}{2\pi} \int_{0}^{2\pi} \frac{|u(x+re^{i\theta})|}{r^{n}}\d \theta \leq n!r^{-n}e^{-A}e^{Ae^{C_0r}}
\end{align*}
for all $x\in \R$. Choosing $r=C_0^{-1}\log\frac{n}{A\log n}$, we find
\begin{align*}
 \|u^{(n)}\|_{L^\infty(\R)}\leq n!(C_0^{-1}\log\frac{n}{A\log n})^{-n}e^{-A}e^{\frac{n}{\log n}}\leq  n!C_0^n(\log n)^{-n} (1-\frac{\log (A\log n)}{\log n})^{-n}e^{\frac{n}{\log n}}.
\end{align*}
Thus, \eqref{equ-sharp-4} follows if we show that there exists $K>0$ such that
$$
   C_0 \frac{\log (n+e)}{\log n}  (1-\frac{\log (A\log n)}{\log n})^{-1}e^{\frac{1}{\log n}}\leq \kappa C_0+K(1+\frac{1}{\log \kappa})
$$
holds for all $C_0>0, \kappa>1$. Noting $\frac{\log (n+e)}{\log n}=1+O(\frac{1}{n})$ (see \eqref{equ-aj-5}), we find $\frac{\log (n+e)}{\log n}e^{\frac{1}{\log n}}\leq 1+\frac{K_1}{\log n}$ for some $K_1>0$. Recalling $A=\frac{1}{1+C_0^2}$, it suffices to show
\begin{align}\label{equ-95-1}
C_0 \frac{\log n+K_1}{\log n+\log(1+C_0^2)-\log\log n} \leq \kappa C_0+K(1+\frac{1}{\log \kappa}).
\end{align}
If $\log(1+C_0^2)\geq\log \log n+K_1$, then \eqref{equ-95-1} holds since the left-hand side is bounded by $C_0$. Thus, we may assume that
\begin{align}\label{equ-95-1.5}
  \log(1+C_0^2)< \log \log n+K_1.
\end{align}
In this case, \eqref{equ-95-1} can be rewritten as the following equivalent form:
\begin{align}\label{equ-95-2}
C_0 \frac{\log\log n+K_1-\log(1+C_0^2)}{\log n+\log(1+C_0^2)-\log\log n} \leq (\kappa -1)C_0+K(1+\frac{1}{\log \kappa}).
\end{align}
We assume in the sequel
\begin{align}\label{equ-95-3}
\frac{\log\log n+K_1-\log(1+C_0^2)}{\log n+\log(1+C_0^2)-\log\log n}\geq \kappa-1
\end{align}
since otherwise \eqref{equ-95-2} holds clearly. From \eqref{equ-95-3}, we have
\begin{align*}
\log n&\leq \log\log n-\log(1+C_0^2)+\frac{1}{\kappa-1}(\log\log n+K_1-\log(1+C_0^2))\\
&\leq \log\log n +\frac{1}{\kappa-1}(\log\log n+K_1)\leq (1+\frac{1}{\kappa-1})(\log\log n+K_1).
\end{align*}
It follows that for some numerical constant $K_2>0$,
\begin{align}\label{equ-95-4}
1+\frac{1}{\kappa-1}\geq \frac{\log n}{\log\log n+K_1}\geq K_2(\log n)^{\frac{1}{2}}.
\end{align}
Thus, the left-hand side of \eqref{equ-95-2} is bounded by
\begin{align*}
 C_0 \frac{\log\log n+K_1}{\log n-\log\log n} &\lesssim C_0 \stackrel{\mbox{ \footnotesize by}\eqref{equ-95-1.5}}{\leq} (e^{K_1}\log n)^{\frac{1}{2}}\stackrel{\mbox{ \footnotesize by}\eqref{equ-95-4}}{\leq}  e^{K_1/2}K_2^{-1}(1+\frac{1}{\kappa-1} )\\
 &\lesssim e^{K_1/2}K_2^{-1}(1+\frac{1}{\log \kappa} )   
\end{align*}
as desired.

\medskip

We now show that the function \( u \) does not satisfy the estimates \eqref{equ-sharp-2} or \eqref{equ-sharp-3}. The approach involves determining the growth rates of \( |u(z)| \) as \( z \to \infty \) under these bounds and then showing that these rates lead to a contradiction to the definition of \( u(x) \).

(1) Suppose that \eqref{equ-sharp-2} holds.
Then for every $z\in \C$, by the Taylor expansion and using  \eqref{equ-sharp-2} with $x=0$, we find
$$
|u(z)|\leq \sum_{n}|\frac{u^{(n)}(0)}{n!}||z|^n \leq C(1+r)+\sum_{n>1} \frac{C^n }{\log^{\lambda n}n}r^n, \quad r=|z|.
$$
Split the sum into two terms $S_1+S_2$, where $S_1$ contains the terms with $n$ satisfying $Cr<(\log n)^\lambda n^{-\frac{2}{n}}$ and $S_2$ contains the remaining terms. For $S_1$, we have
$$
S_1\leq \sum_{n>1,Cr<(\log n)^\lambda n^{-\frac{2}{n}}} n^{-2}<\frac{\pi^2}{6}.
$$
For $S_2$, we have $Cr\geq (\log n)^\lambda n^{-\frac{2}{n}}$, which implies that
$$
\log n\leq (Cr)^{\frac{1}{\lambda}}n^{\frac{2}{\lambda n}}\leq (Cr)^{\frac{1}{\lambda}}e^{\frac{2}{\lambda }}=(Ce^2r)^{\frac{1}{\lambda}},
$$
where we used $n^{1/n}\leq e$. Thus, $n\leq e^{(Ce^2r)^{\frac{1}{\lambda}}}:=N$, and we bound $S_2$ by
$$
S_2 \leq \sum_{n>1} \frac{C^n }{\log^{\lambda n}n}r^N\leq C'r^N=C'r^{e^{(Ce^2r)^{\frac{1}{\lambda}}}}=C'e^{(\log r)e^{(Ce^2r)^{\frac{1}{\lambda}}}}<C'e^{ e^{(2Ce^2r)^{\frac{1}{\lambda}}}}
$$
when $r$ is large.
Combining the above bounds, we see that for some large constant $C''>0$,
$$
|u(z)|\leq C''(1+|z|)+C''e^{e^{(C''|z|)^{\frac{1}{\lambda}}}}.
$$
In particular, letting $z=iy$, we see
$$
|u(iy)|\leq C''(1+|y|)+C''e^{e^{(C''|y|)^{\frac{1}{\lambda}}}}, \quad \forall y\in \R.
$$
If $\lambda>1$, the growth rate leads to a contradiction since $|u(iy)|= e^{-A} e^{Ae^{-C_0y}}\sim  e^{Ae^{C_0|y|}}$ as $y\to -\infty$.

(2) The idea is similar. In fact, by the Taylor expansion, we have
$$
|u(z)|\leq  C(1+r)+\sum_{n>1} \frac{(\kappa C_0+C)^n }{\log^{n}n}r^n, \quad r=|z|.
$$
Split the sum as $S_1+S_2+S_3$, where $S_1$ contains the sum for all $n$ such that $\log n\geq n^{2/n}r(\kappa C_0+C)$, $S_2$ contains the sum for all $n>N_0$ such that $\log n< n^{2/n}r(\kappa C_0+C)$, and $S_3$ contains the remaining terms, where $N_0$ is to be determined. Then we have $S_1\lesssim 1$ as before. Since $\lim_{n\to \infty} n^{2/n}=1$, we have for any $\varepsilon>0$, 
$$ 
n^{2/n}r(\kappa C_0+C)<(1+\varepsilon)r(\kappa C_0+C)
$$ 
if $n>N_0$ and $N_0$ is sufficiently large. Thus, with such $N_0$,  $S_2$ only contains the sum for $n\leq e^{(1+\varepsilon)r(\kappa C_0+C)}$. Hence
$$
S_2\lesssim \Big(r(\kappa C_0+C) \Big)^{e^{(1+\varepsilon)r(\kappa C_0+C)}}
=e^{e^{(1+\varepsilon)r(\kappa C_0+C)}\log\big(r(\kappa C_0+C) \big)}\leq  e^{C_\varepsilon e^{(1+2\varepsilon)r(\kappa C_0+C)}}.
$$
If $\kappa<1$, then we have $S_2\lesssim e^{C_\varepsilon e^{(1-\varepsilon)C_0 r + 3Cr}}$ for $\varepsilon$ is small enough. Moreover, it is easily seen that $S_3\lesssim (r(\kappa C_0+C))^{N_0}$. Summing up, we find
$$
|u(z)|\lesssim 1+|z|+|z|^{N_0}(\kappa C_0+C)^{N_0}+e^{C_\varepsilon e^{(1-\varepsilon)C_0 |z| + 3C|z|}}, \quad \forall z\in \C.
$$
It follows that $|u(iy)|\lesssim e^{C_\varepsilon e^{(1-\varepsilon)C_0 |y| + 3C|y|}}$.
This leads to a contradiction when $C_0\to \infty$ since $|u(iy)|\sim e^{Ae^{C_0|y|}}$ as $y\to -\infty$.
\end{proof}

Theorem \ref{thm-1} provides log-type ultra-analytic bounds for elliptic equations with lower-order terms and variable coefficients. A natural extension of this question is to examine what happens with second-order elliptic equations that have variable leading coefficients. Consider the elliptic equation
\begin{align}\label{equ-div-830-1}
  \partial_x(A(x)\partial_x u) = W(x)\partial_x u + V(x)u, \quad x \in \R,
\end{align}
where the function \(A: \R \to \C\) is non-trivial (i.e., \(A \neq 0\)) and satisfies
\begin{align}\label{equ-div-830-2}
 \|\partial_x^n A\|_{L^\infty(\R)} \leq C_1^n, \quad \forall n \in \N,
\end{align}
for some constant \(C_1 > 0\). The following result shows that similar ultra-analytic bounds to those in Theorem \ref{thm-1} hold for the solutions of \eqref{equ-div-830-1} if and only if \(A\) equals to a multiple of \(e^{iCx}\) for some constant \(C\). This implies that the ultra-analytic bounds only hold in this trivial case, which reduces to the elliptic equation with constant leading coefficients studied in Theorem \ref{thm-1}. Specifically, when \(A\) takes the form \(e^{iCx}\), the equation \eqref{equ-div-830-1} can be rewritten as a constant coefficient case \eqref{equ-elip} with modified coefficients \(W\) and \(V\). The proof below indicates that the divergence form of \eqref{equ-div-830-1} is not crucial; the same conclusion applies to the non-divergence form.

\begin{proposition}\label{prop-div}
Assuming \eqref{equ-div-830-2}, the following statements are equivalent.
\begin{itemize}
    \item [(i)] Assume that $\|\partial_x^n W\|_{L^\infty(\R)}, \|\partial_x^nV\|_{L^\infty(\R)}\leq C_0^{n+1}$ for all $n\in \N$ with some constant $C_0>0$, then there exists a constant $C>0$ such that the solutions of \eqref{equ-div-830-1} satisfy
\begin{align}\label{equ-div-830-3}
    \|\partial_x^n u\|_{L^\infty(\R)}\leq \frac{C^{n+1}n!}{\log^n(n+e)}, \quad \forall n\in \N.
\end{align}
    \item [(ii)] There exist constants $C_2\in \R$ satisfying $|C_2|\leq C_1$, and $C_3\in \C$ satisfying $\emph{Re } C_3\leq 0$ such that $A(x)=e^{iC_2x+C_3}, x\in \R$.
\end{itemize}
\end{proposition}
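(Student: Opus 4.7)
For $(\text{ii}) \Rightarrow (\text{i})$, the plan is to reduce \eqref{equ-div-830-1} to a constant-leading-coefficient equation and then invoke Theorem \ref{thm-1}. When $A(x) = e^{iC_2 x + C_3}$, one has $A'(x) = iC_2 A(x)$, so \eqref{equ-div-830-1} reads $Au'' + iC_2 A u' = W u' + V u$. Dividing by $A$ (well-defined since $|A| \equiv e^{\mathrm{Re}(C_3)} > 0$ on $\R$) yields $u'' = (W/A - iC_2) u' + (V/A) u$, whose coefficients are entire of exponential type because $1/A = e^{-iC_2 x - C_3}$ is. Theorem \ref{thm-1} then delivers the log-type bound \eqref{equ-div-830-3} directly.

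For the harder direction $(\text{i}) \Rightarrow (\text{ii})$, the central task is to show that $A$ has no zeros in $\C$; once this is established, Hadamard factorization together with the exponential-type bound \eqref{equ-div-830-2} will pin down the form of $A$. I would argue the no-zeros property by contradiction: suppose $A(z_0) = 0$ for some $z_0 \in \C$. Take $W = 0$ and $V = -\lambda$ with $\lambda > 0$ to be chosen (both trivially satisfy the admissibility bound with $C_0 = \lambda$). Equation \eqref{equ-div-830-1} then becomes $(A u')' + \lambda u = 0$, equivalent on $\{A \neq 0\}$ to the Hill-type equation $u'' + (A'/A) u' + (\lambda/A) u = 0$. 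For a suitable $\lambda$ --- for instance, $\lambda$ in a stability band of the associated Sturm-Liouville/Hill operator when $A$ is periodic on $\R$, or $\lambda$ large via WKB asymptotics when $A$ is bounded below on $\R$ --- this equation admits two linearly independent $L^\infty(\R)$ solutions $u_1, u_2$.

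Hypothesis (i) applied to both $u_1, u_2$ forces each to satisfy the log-type bound \eqref{equ-div-830-3}, hence to extend to an entire function on $\C$; consequently their Wronskian $W(u_1,u_2) = u_1 u_2' - u_2 u_1'$ is entire. On the other hand, Abel's identity applied to the non-divergence form of the Hill equation gives $W(u_1,u_2)(z) = W(u_1,u_2)(0) \cdot A(0)/A(z)$, which has a pole at $z_0$ --- a contradiction. Hence $A$ has no zeros in $\C$. I expect this Wronskian step --- together with the prerequisite verification that some admissible $V$ produces two linearly independent $L^\infty$ solutions --- to be the main obstacle, as it requires a modest case split according to the structure of $A$ on $\R$ (for example, $A$ periodic versus $A$ merely bounded below, and a separate treatment of any real zero of $A$, where a different choice of $W, V$ is used to produce the contradiction).

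Finally, $A$ is entire of order $\leq 1$ with no zeros, so by Hadamard factorization $A(z) = e^{az + b}$ for some $a, b \in \C$. Boundedness of $A$ on $\R$ (from \eqref{equ-div-830-2} at $n = 0$) forces $\mathrm{Re}(a) = 0$; writing $a = iC_2$ with $C_2 \in \R$, one finds $\|A\|_{L^\infty(\R)} = e^{\mathrm{Re}(b)} \leq 1$, so $\mathrm{Re}(b) \leq 0$. Setting $C_3 = b$, the bound $\|\partial_x^n A\|_{L^\infty} = |C_2|^n e^{\mathrm{Re}(C_3)} \leq C_1^n$ yields $|C_2| \leq C_1$ upon letting $n \to \infty$, completing the verification of (ii).
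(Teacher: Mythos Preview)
Your $(\text{ii})\Rightarrow(\text{i})$ argument matches the paper's. Your $(\text{i})\Rightarrow(\text{ii})$ strategy---show $A$ has no zeros, then invoke Hadamard---is also the paper's, and your concluding Hadamard step is fine. The gap is in the no-zeros step.

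Your plan hinges on producing, for some admissible $(W,V)$, two linearly independent $L^\infty(\R)$ solutions, so that the entire Wronskian contradicts $A\cdot W(u_1,u_2)=\text{const}$. But you never establish that such a pair exists. The hints you give do not cover the general case: $A$ need not be periodic (so Floquet theory is unavailable), and $A$ need not be bounded below on $\R$ (entire functions of exponential type bounded on $\R$ can satisfy $\inf_\R|A|=0$ without any real zero, e.g.\ suitable normalizations of $\sin(z-i)/(z-i)$), so ``WKB for $A$ bounded below'' does not apply either. The case of a real zero of $A$ you defer entirely to ``a different choice of $W,V$'' without saying what it is. As written, the existence of two bounded independent solutions is an assumption, not a lemma you can prove.

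The paper sidesteps this completely. Writing $A(z)=(z-z_0)^k h(z)$ with $h$ entire of exponential type (and nonzero at $z_0$), it chooses $W=h+\partial_x A$ and $V=0$, so that in non-divergence form the equation collapses to $(x-z_0)^k u''=u'$. This is solved explicitly: $u'=x-z_0$ for $k=1$ (unbounded, violating \eqref{equ-div-830-3}), and $u'=\exp\bigl(\tfrac{1}{1-k}(x-z_0)^{1-k}\bigr)$ for $k\ge2$ (not entire, again violating \eqref{equ-div-830-3}). No existence theory, no case split on the structure of $A$---just a single explicit counterexample manufactured from the putative zero.
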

\begin{proof}
It is convenient to rewrite \eqref{equ-div-830-1} into a non-divergence form as
\begin{align}\label{equ-div-830-4}
  A(x)\partial_x^2 u = \widetilde{W(x)}\partial_xu+V(x)u, \quad x\in \R,
\end{align}
where $\widetilde{W}=W-\partial_x A$.

 $(ii)\Longrightarrow(i)$. If $A(x)=e^{iC_2x+C_3}$, where $C_2\in \R$ satisfies $|C_2|\leq C_1$, and $C_3\in \C$ satisfies $\mbox{Re } C_3\leq 0$, then the equation \eqref{equ-div-830-4} reduces to the one-dimensional case of \eqref{equ-elip}. Indeed, we can rewrite further \eqref{equ-div-830-4} as
 \begin{align}\label{equ-div-830-5}
\partial_x^2 u = W_1\partial_xu+V_1u, \quad x\in \R,
\end{align}
where $W_1=WA^{-1}-A^{-1}\partial_xA$ and $V_1=VA^{-1}$. It is easy to check  that
$$
\|\partial_x^n W_1\|_{L^\infty(\R)}\leq C_0(C_0+C_1)^n+C_1, \|\partial_x^n V_1\|_{L^\infty(\R)}\leq C_0(C_1+C_0)^n\quad \forall n\in \N.
$$
 Then Theorem \ref{thm-1} gives the bound \eqref{equ-div-830-3}. Note that the factor $C^{n+1}$ (not $C^n$) is necessary since $W_1$ is not bounded by $1$ and we do not assume that $\|u\|_{L^\infty(\R)}=1$.

$(i)\Longrightarrow(ii)$. We first note that under the assumption \eqref{equ-div-830-2}, $A$ can be extended to an entire function on $\C$, still denoted by $A$. Now we claim that if $(i)$ holds, then the map $A:\C\mapsto \C$ has no zeros. In other words,
\begin{align}\label{equ-div-830-6}
    A(z)\neq 0, \quad \forall z\in \C.
\end{align}
Once this claim is proved, we then conclude that $A(x)=e^{iC_2x+C_3}$ for some $C_2\in \R$ with $|C_2|\leq C_1$, $C_3\in \C$ with $\mbox{Re }C_3\leq 0$. Indeed, since $A$ is an entire function and have no zeros, then there exists an entire function $g$ such that
\begin{align}\label{equ-div-830-7}
    A(z)=e^{g(z)}, \quad \forall z\in \C;
\end{align}
see e.g.  \cite[Theorem 6.2]{Steincomplex}. Moreover, by the assumption \eqref{equ-div-830-2}, $A$ is of the exponential type, namely,
\begin{align}\label{equ-div-830-8}
 |A(z)|\leq e^{C_1|z|}, \quad z\in \C.
\end{align}
 Thus, we infer from \eqref{equ-div-830-7}-\eqref{equ-div-830-8} that $|g(z)|\lesssim C_1|z|$ as $|z|\to \infty$. By  \cite[Lemma 5.5]{Steincomplex}, $g$ is a polynomial of degree $\leq 1$. In other words,
$$
g(z)=az+b, \quad z\in \C
$$
for some constants $a,b\in \C$. In particular, $A(x)=e^{ax+b}, x\in \R$. Since $|A(x)|\leq 1$ for all $x\in \R$, we have $\mbox{ Re } a=0$ and $e^{\mbox{Re }b}\leq 1$ (or equivalently $\mbox{Re }b\leq 0$). The derivative bounds of $A$ show that $|\mbox{Im }a|\leq C_1$. This proves the desired conclusion.

It remains to prove the claim \eqref{equ-div-830-6}. We shall use a contradiction argument. Suppose that \eqref{equ-div-830-6} fails. Then there exists a $z_0\in \C$ such that $A(z_0)=0$. Since $A$ is not the zero function, by the unique continuation of analytic functions, $z_0$ is a zero of finite order, say $1\leq k\in \N$. Then the Taylor expansion of the function $A$ in $z_0$ is given by
$$
A(z)=\sum_{n\in \N} \frac{A^{(n)}(z_0)}{n!}(z-z_0)^n=\sum_{n\geq k} \frac{A^{(n)}(z_0)}{n!} (z-z_0) n, \quad z\in \C.
$$
Then we can write
\begin{align}\label{equ-div-830-9}
    A(z)=(z-z_0)^kh(z), \quad z\in \C,
\end{align}
where
\begin{align}\label{equ-div-830-10}
h(z)=\sum_{n\geq k}\frac{A^{(n)}(z_0)}{n!}(z-z_0)^{n-k}, \quad z\in \C.
\end{align}
% For every $x\in \R$, using the same argument to $A(x+z)$, we obtain
% $$
% h(x+z)=\sum_{n\geq k}\frac{A^{(n)}(z_0-x)}{n!}(x+z-z_0)^{n-k}, \quad z\in \C.
% $$
% Now we estimate the growth bound of $|h(x+z)|$.
% Note that the assumption \eqref{equ-div-830-2} in fact implies a stronger version of \eqref{equ-div-830-8}, namely
% \begin{align}\label{equ-div-830-10}
%  |A(x+z)|\leq e^{C_1|z|}, \quad \forall x\in \R, z\in \C,
% \end{align}
% which follows by the Taylor expansion at the point $x$.
Next, we show that $h$ is an analytic function of exponential type. Indeed, by \eqref{equ-div-830-8} and the Cauchy formula,
$$
|A^{(n)}(z_0)|\leq \frac{n!}{2\pi}\left|\int_\mathcal{C} \frac{A(z)}{(z-z_0)^{n+1}}\d z\right|\leq \frac{n!}{2\pi}\int_0^{2\pi} \frac{|A(z_0+re^{i\theta})|}{r^n}\d \theta \leq n!r^{-n}e^{C_1(r+|z_0|)},
$$
where $\mathcal {C}$ is the circle $\{z\in\C: |z-z_0|=r\}$, $r>0$ is arbitrary. Setting $r=n/C_1$ and using the Stirling formula, we obtain $|A^{(n)}(z_0)|\lesssim \sqrt{n}C_1^ne^{C_1|z_0|}$. Thus, we infer from \eqref{equ-div-830-10} that
\begin{align}\label{equ-div-830-11}
   |h(z)|\lesssim \sum_{n\geq k} \frac{\sqrt{n}C_1^ne^{C_1|z_0|}}{n!}\leq e^{C_1|z_0|}\sum_{n\in \N} \frac{(2C_1)^n}{n!} \leq e^{C_1(|z_0|+2)}, \quad \mbox{ if } |z-z_0|<1.
\end{align}
To get a bound in the case $|z-z_0|\geq 1$, we observe that
\begin{align}\label{equ-div-830-12}
|A(x+iy)|\leq e^{C_1|y|}, \quad \forall x,y\in \R.
\end{align}
In fact, owing to \eqref{equ-div-830-8} and $|A(x)|\leq 1$ for all $x\in \R$, \eqref{equ-div-830-12} follows from the Phragm\'{e}n-Lindel\"{o}f theorem; see e.g. \cite[Theorem 11, p.70]{Younganintroduction}. Then we infer from \eqref{equ-div-830-9} and \eqref{equ-div-830-12} that
\begin{align}\label{equ-div-830-13}
|h(z)|\leq \frac{|A(z)|}{|z-z_0|^k}\leq e^{C_1|y|}, \quad \mbox{ if } z=x+iy,\, |z-z_0|\geq 1.
\end{align}
Combining \eqref{equ-div-830-11} and \eqref{equ-div-830-13}, we get
\begin{align}\label{equ-div-830-14}
|h(x+iy)|\leq e^{C_1(|z_0|+2)}e^{C_1|y|}, \quad \forall x,y\in \R.
\end{align}
With \eqref{equ-div-830-14} in hand, using the Cauchy formula again, one can show that
\begin{align}\label{equ-div-830-15}
\|\partial^n_x h\|_{L^\infty(\R)}\leq C^n, \quad \forall n\in \N
\end{align}
for some constant $C>0$ depending on $|z_0|$ and $C_1$.

To get a contradiction, we show that, with a well-chosen $W$ and $V$, there is a solution of \eqref{equ-div-830-1} which does not satisfy \eqref{equ-div-830-3}.
Indeed, in light of \eqref{equ-div-830-15}, letting $W=h+\partial_xA$ and $V=0$, then $\|\partial_x^n W\|_{L^\infty(\R)}, \|\partial_x^nV\|_{L^\infty(\R)}\leq C_0^{n+1}$ for all $n\in \N$ with some constant $C_0>0$. With such choice, the equation \eqref{equ-div-830-4} then becomes
\begin{align}\label{equ-div-830-16}
 (x-z_0)^k\partial_x^2u=\partial_x u, \quad x\in \R.
\end{align}
Solving \eqref{equ-div-830-16} shows that there exists a solution $u$ satisfying
$$
\partial_xu(x)=
\left\{
\begin{array}{cc}
  x-z_0   &  \mbox{ if } k=1 \\
  e^{\frac{1}{1-k}(x-z_0)^{1-k}}   &  \mbox{ if } k\geq 2.
\end{array}
\right.
$$
In the case when $k=1$, $\partial_x u$ is not uniformly bounded on $\R$, thus \eqref{equ-div-830-3} fails. In the case when $k\geq 2$, $\partial_xu$ is not analytic at the point $z=z_0$, which also leads to a contradiction to \eqref{equ-div-830-3}, since \eqref{equ-div-830-3} implies that $u$ is an entire function. This completes the proof.
\end{proof}

\section{Higher dimensional case: Proof of Theorem \ref{thm-2}}
In the sequel, we also use $D^\alpha$ to denote the multi-index partial derivative $\partial_x^\alpha, \alpha\in \N^d$. For some technical reasons, we split our discussion into three cases, $1<p<\infty$, $p=1$, and $p=\infty$.

\subsection{The case \texorpdfstring{$1< p<\infty$}{}}

We first extend Lemma \ref{lem-bootstrap} to higher dimensions.
\begin{lemma}\label{lem-bootstrap-high}
Let $\kappa>1$ and $C\geq \kappa C_0$. Then there exists a constant $C_\kappa>0$ depending only on $\kappa$ such that for all multi-index $\alpha\in \N^d$
\begin{align}\label{equ-high-bi-0}
\sum_{\beta,\gamma\in \N^d: \beta+\gamma=\alpha}\frac{\alpha!}{\beta!\gamma!}\frac{C_0^{|\beta|}C^{|\gamma|}|\gamma|!}
{\log^{|\gamma|}(|\gamma|+e)}\lesssim (\frac{1}{\log \kappa}+1) \frac{(|\alpha|+1)!  C^{|\alpha|}}{\log^{|\alpha|+1}(|\alpha|+1+e)}.
\end{align}
\end{lemma}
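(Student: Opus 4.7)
The plan is to reduce the multi-index sum to the one-dimensional sum already handled by Lemma \ref{lem-bootstrap}, by grouping terms according to the sizes $|\beta|=l$ and $|\gamma|=j$ with $l+j=n:=|\alpha|$.

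The key ingredient is the multivariable Vandermonde-type identity
\begin{align*}
\sum_{\substack{\beta\in\mathbb{N}^d\\ |\beta|=l,\,\beta\leq\alpha}}\frac{\alpha!}{\beta!(\alpha-\beta)!}=\binom{|\alpha|}{l},
\end{align*}
which follows by matching coefficients of $t^l$ on both sides of $\prod_{i=1}^d(1+t)^{\alpha_i}=(1+t)^{|\alpha|}$. Since the inner summand on the left-hand side of \eqref{equ-high-bi-0} only depends on $\beta$ and $\gamma$ through $|\beta|$ and $|\gamma|$, this identity lets me collapse the multi-index sum into
\begin{align*}
\sum_{\beta+\gamma=\alpha}\frac{\alpha!}{\beta!\gamma!}\frac{C_0^{|\beta|}C^{|\gamma|}|\gamma|!}{\log^{|\gamma|}(|\gamma|+e)}=\sum_{l+j=n}\binom{n}{l}\frac{C_0^{l}C^{j}\,j!}{\log^{j}(j+e)}=n!\sum_{l+j=n}\frac{C_0^{l}C^{j}}{l!\log^{j}(j+e)}.
\end{align*}

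At this point the problem is exactly the one-dimensional situation. Applying Lemma \ref{lem-bootstrap} to the remaining sum immediately yields
\begin{align*}
n!\sum_{l+j=n}\frac{C_0^{l}C^{j}}{l!\log^{j}(j+e)}\lesssim \Bigl(\tfrac{1}{\log\kappa}+1\Bigr)\frac{(n+1)!\,C^{n}}{\log^{n+1}(n+1+e)},
\end{align*}
and substituting $n=|\alpha|$ gives precisely \eqref{equ-high-bi-0}.

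There is essentially no obstacle here: the only nontrivial step is recognizing that the combinatorial factor $\alpha!/(\beta!\gamma!)$ sums correctly over multi-indices with prescribed total length. Once that identity is in hand, Lemma \ref{lem-bootstrap} provides the rest of the estimate for free, and the implicit constant is the same one (call it $K_1$) produced in the one-dimensional lemma, so no $\kappa$-dependence beyond the factor $\frac{1}{\log\kappa}+1$ is introduced.
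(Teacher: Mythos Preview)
Your proof is correct and follows essentially the same approach as the paper: both reduce to Lemma~\ref{lem-bootstrap} via the same combinatorial identity $\sum_{|\beta|=l,\,\beta\le\alpha}\frac{\alpha!}{\beta!(\alpha-\beta)!}=\binom{|\alpha|}{l}$, which the paper states in the equivalent form \eqref{equ-high-bi-1} and proves by comparing coefficients of $x^\alpha$ in $(x_1+\cdots+x_d)^{|\alpha|}$, whereas you compare coefficients of $t^l$ in $(1+t)^{|\alpha|}$. The subsequent collapse to the one-dimensional sum and the invocation of Lemma~\ref{lem-bootstrap} are identical.
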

\begin{proof}
We shall reduce the proof to the one-dimensional case by the following combinatorial identity. For every $l,j\in \N$ such that $l+j=|\alpha|$, we have
\begin{align}\label{equ-high-bi-1}
\sum_{\beta,\gamma\in \N^d:|\beta|=l, |\gamma|=j,\beta+\gamma=\alpha} \frac{|\beta|!}{\beta!}\frac{|\gamma|!}{\gamma!}=\frac{|\alpha|!}{\alpha!}.
\end{align}
In fact, if $d=1$, \eqref{equ-high-bi-1} holds since both sides equal to $1$. Now assume $d\geq 2$. Fix $\alpha=(\alpha_1,\alpha_2,\ldots,\alpha_d)\in \N^d$. Consider a polynomial
$$
p(x_1,x_2,\ldots, x_{d})=(x_1+x_2+\cdots+x_d)^{|\alpha|}.
$$
On the one hand, expanding $p$ we find that the coefficient of $x^\alpha=x_1^{\alpha_1}x_2^{\alpha_2}\cdots x_d^{\alpha_d}$ is $\frac{|\alpha|!}{\alpha!}$, which is the right-hand side of \eqref{equ-high-bi-1}. On the other hand, we write
$$
p(x_1,x_2,\ldots, x_{d})=(x_1+x_2+\cdots+x_d)^{l}\cdot(x_1+x_2+\cdots+x_d)^{j}.
$$
For every multi-index $\beta\le \alpha$ with $|\beta|=l$, $\beta+\gamma=\alpha$, the expansion contains the term $\frac{|\beta|!}{\beta!}\frac{|\gamma|!}{\gamma!}x^\alpha$. Taking the sum for all $\beta\le \alpha$ satisfying $|\beta|=l$, we find that the left-hand side of \eqref{equ-high-bi-1} is also the coefficient of $x^\alpha$. This proves \eqref{equ-high-bi-1}.

With \eqref{equ-high-bi-1} in hand, we can rewrite the sum as
\begin{align*}
\sum_{\beta,\gamma\in \N^d: \beta+\gamma=\alpha}\frac{\alpha!}{\beta!\gamma!}
\frac{C_0^{|\beta|}C^{|\gamma|}|\gamma|!}{\log^{|\gamma|}(|\gamma|+e)}
&=\sum_{l+j=|\alpha|}\sum_{\substack{\beta,\gamma\in \N^d \\|\beta|=l,|\gamma|=j, \beta+\gamma=\alpha}}\frac{\alpha!}{\beta!\gamma!}
\frac{C_0^{|\beta|}C^{|\gamma|}|\gamma|!}{\log^{|\gamma|}(|\gamma|+e)}\\
&=\sum_{l+j=|\alpha|}
\frac{|\alpha|!C_0^{l}C^{j}  }{l!\log^{j}(j+e)}.
\end{align*}
Applying Lemma \ref{lem-bootstrap} to the right-hand side we obtain \eqref{equ-high-bi-0}.
\end{proof}

Fix $1<p<\infty$. By the elliptic regularity, we have
\begin{align}\label{equ-ana-high-1}
\sup_{|\alpha|=2}\|D^\alpha u\|_{L^p(\R^d)}\leq c_p\|\Delta u\|_{L^p(\R^d)}
\end{align}
for some constant $c_p>0$ depending only on $p,d$. Moreover, if $\kappa>1$ and $C\geq \kappa C_0$, it follows from Lemma \ref{lem-bootstrap-high} that there exists a numerical constant $K_1>0$ such that for all $\alpha\in \N^d$,
\begin{align}\label{equ-high-bi-1.5}
\sum_{\beta,\gamma\in \N^d: \beta+\gamma=\alpha}\frac{\alpha!}{\beta!\gamma!}\frac{C_0^{|\beta|}C^{|\gamma|}|\gamma|!}
{\log^{|\gamma|}(|\gamma|+e)}\leq K_1 (\frac{1}{\log \kappa}+1) \frac{(|\alpha|+1)!  C^{|\alpha|}}{\log^{|\alpha|+1}(|\alpha|+1+e)}.
\end{align}

Let $u\in L^p(\R^d)$ be a solution to \eqref{equ-elip} with $\|u\|_{L^p(\R^d)}=1$. We claim that  there exists a constant $K_2=K_2(d,p)$ depending only on $d,p$ such that
\begin{align}\label{equ-ana-high-2}
\sup_{|\alpha|=k}\|D^\alpha u\|_{L^p(\R^d)}\leq  \frac{C^{k}k!}{\log^{k}(k+e)}, \quad \forall k \in \N,
\end{align}
where $C=\kappa C_0+(K_2+c_p K_1 (d+1))(\frac{1}{\log \kappa}+1)$.

As before, we use the induction argument. Assume that \eqref{equ-ana-high-2} holds for all $k\leq n+1$ with some $n\geq 1$. Now we prove the same bound for $k=n+2$. Indeed, using \eqref{equ-ana-high-1} and the equation \eqref{equ-elip} we have
\begin{align*}
\sup_{|\alpha|=n+2}\|D^\alpha u\|_{L^p(\R^d)}&\leq c_p\sup_{|\alpha|=n} \|D^\alpha \Delta u\|_{L^p(\R^d)}
=c_p\sup_{|\alpha|=n}\|D^\alpha (W\cdot \nabla u +Vu)\|_{L^p(\R^d)}\\
&\leq c_p \sup_{|\alpha|=n} \sum_{\beta+\gamma=\alpha}\frac{\alpha!}{\beta!\gamma!}(\|D^\beta W\cdot D^\gamma\nabla u\|_{L^p(\R^d)}+\|D^\beta VD^\gamma u\|_{L^p(\R^d)})\\
&={\rm I}+{\rm II}.
\end{align*}
For the first term, using our assumptions on $W$ and \eqref{equ-ana-high-2} we get
\begin{align}\label{equ-99-1}
{\rm I}&\leq  c_pd \sup_{|\alpha|=n} \sum_{\beta+\gamma=\alpha}\frac{\alpha!}{\beta!\gamma!}C_0^{|\beta|}
\frac{C^{|\gamma|+1}(|\gamma|+1)!}{\log^{|\gamma|+1}(|\gamma|+1+e)}\nonumber\\
&\leq c_pd \sup_{|\alpha|=n} \sum_{\beta+\gamma=\alpha}\frac{(\alpha+e_1)!}{\beta!(\gamma+e_1)!}C_0^{|\beta|}
\frac{C^{|\gamma|+1}(|\gamma|+1)!}{\log^{|\gamma|+1}(|\gamma|+1+e)}\nonumber\\
&\leq  c_pd  \sup_{|\alpha|=n+1} \sum_{\beta+\gamma=\alpha}\frac{\alpha!}{\beta!\gamma!}C_0^{|\beta|}
\frac{C^{|\gamma|}|\gamma|!}{\log^{|\gamma|}(|\gamma|+e)}\nonumber\\
&\leq  c_pdK_1 (\frac{1}{\log \kappa}+1)    \frac{ C^{n+1}(n+2)!}{\log^{n+2}(n+{2}+e)},
\end{align}
where $e_1=(1,0,\ldots,0)$ and in the last step we used \eqref{equ-high-bi-1.5}.
Similarly, using \eqref{equ-high-bi-1.5} again we  have
$$
{\rm II}\leq  c_p K_1 (\frac{1}{\log \kappa}+1)   \frac{ C^{n}(n+1)!}{\log^{n+{1}}(n+1+e)}.
$$
Combining the bounds of $\rm I, II$, noting the definition of $C$, we deduce that
$$
\sup_{|\alpha|=n+2}\|D^\alpha u\|_{L^p(\R^d)}\leq  \frac{ C^{n+2}(n+2)!}{\log^{n+2}(n+2+e)}.
$$
Thus, \eqref{equ-ana-high-2} holds when $k=n+2$.

Now we check that \eqref{equ-ana-high-2} holds for all $k\leq 2$ with some well-chosen $K_2$. This is clear if $k=0$. To prove the bounds for $k=1,2$, we proceed as in the one-dimensional case. Owing to the elliptic regularity \eqref{equ-ana-high-1} and the equation \eqref{equ-elip}, we have
\begin{align}\label{equ-high-p-810-1}
\sup_{|\alpha|=2}\|D^\alpha u\|_{L^p(\R^d)}\lesssim_{d,p} \|W\cdot\nabla u+Vu\|_{L^p(\R^d)}
\lesssim_{d,p}  \sup_{|\alpha|=1}\|D^\alpha u\|_{L^p(\R^d)}+1.
\end{align}
Moreover, we have the interpolation inequality
\begin{align}\label{equ-high-p-810-2}
\sup_{|\alpha|=1}\|D^\alpha u\|_{L^p(\R^d)}  \lesssim_{d,p}\Big(\sup_{|\alpha|=2}\|D^\alpha u\|_{L^p(\R^d)} \|u\|_{L^p(\R^d)}\Big)^{1/2}.
\end{align}
It follows from \eqref{equ-high-p-810-1} and \eqref{equ-high-p-810-2} that $\sup_{|\alpha|\leq 2}\|D^\alpha u\|_{L^p(\R^d)}\lesssim_{d,p} 1$. Thus, \eqref{equ-ana-high-2} holds with some $K_2$ depending only on $p,d$. Finally, setting $K=K_2+c_pK_1(d+1)$, we complete the proof of Theorem \ref{thm-2} in the case when $1<p<\infty$.

\subsection{The case \texorpdfstring{$p=1$}{}}

Recall that for every $1\leq p\leq\infty, 1\leq q< \infty$, the amalgams of $L^p$ and $l^q$ is the Banach space of all functions $u\in L^p_{loc}$ for which the norm
$$
\|u\|_{l^qL^p(\R^n)}= \left( \sum_{j\in \Z^d} \|u\|^q_{L^p(\mathcal {C}(j))} \right)^{1/q}
$$
is finite, where $\mathcal {C}(j)$ is the unit cube (side length being $1$) in $\R^d$ centered at $j\in \Z^d$. If $q=\infty$, the space $\|u\|_{l^\infty L^p(\R^n)}$ can be defined similarly. We refer the reader to \cite{FournierAmalgamsLp1985} for the properties of $l^qL^p(\R^d)$.

We first claim that for every $1<p<\infty, 1\leq q\leq\infty$, the following elliptic regularity holds
\begin{align}\label{equ-ana-L1-812-1}
\sup_{|\alpha|=2}\|D^\alpha u\|_{l^qL^p(\R^d)}\lesssim_{d,p} \|\Delta u\|_{l^qL^p(\R^d)}+\| u\|_{l^qL^p(\R^d)}.
\end{align}
Indeed, using the classical interior regularity, we have for all $j\in \Z^d$,
$$
\sup_{|\alpha|=2}\|D^\alpha u\|_{L^p(B_{\sqrt{d}/2}(j))}\lesssim_{d,p} \|\Delta u\|_{L^p(B_{\sqrt{d}}(j))}+\| u\|_{L^p(B_{\sqrt{d}}(j))}.
$$
Noting that $\mathcal {C}(j)\subset B_{\sqrt{d}/2}(j)$ and $B_{\sqrt{d}}(j)\subset 2\sqrt{d}\mathcal {C}(j)$, we have
\begin{align*}
\sup_{|\alpha|=2}\|D^\alpha u\|_{L^p(\mathcal {C}(j))} &\lesssim_{d,p} \|\Delta u\|_{L^p(2\sqrt{d}\mathcal {C}(j))}+\| u\|_{L^p(2\sqrt{d}\mathcal {C}(j))}\\
&\lesssim_{d,p}\sup_{k\in \Z^d:|k-j|\leq d}\left(\|\Delta u\|_{L^p(\mathcal {C}(k))}+\| u\|_{L^p(\mathcal {C}(k))}\right).    
\end{align*}
The case $q=\infty$ is similar, we assume $1\leq q<\infty$. Taking $l^q$ norm on both sides, we obtain
$$
\sup_{|\alpha|=2} \|D^\alpha u\|_{l^qL^p(\R^d)}\lesssim_{d,p}  \left(\sum_{j}\sup_{k\in \Z^d:|k-j|\leq d} (\|\Delta u\|_{L^p(\mathcal {C}(k))}+\| u\|_{L^p(\mathcal {C}(k))})^q\right)^{1/q},
$$
where the right-hand side is bounded by a constant times $\|\Delta u\|_{l^qL^p(\R^d)}+\| u\|_{l^qL^p(\R^d)}$. This proves \eqref{equ-ana-L1-812-1}.

With \eqref{equ-ana-L1-812-1} in hand, noting the term $\| u\|_{l^qL^p(\R^d)}$ is not harmful in the iteration, one can proceed as in the previous section to prove the following result.
\begin{proposition}\label{prop-L1}
Let $p\in(1,\infty), q\in[1,\infty]$, and $u\in l^qL^p(\R^d)$ be a solution to \eqref{equ-elip} with $\|u\|_{l^qL^p(\R^d)}=1$.
Then there exists a constant $K=K(d,p)>0$ depending only on $d,p$ such that for any $\kappa>1$, $u$ satisfies
\begin{align}\label{equ-ultra-ana-lqLp}
 \|D^\alpha u\|_{l^qL^p(\R^d)}  \leq  \left(\kappa C_0+K(\frac{1}{\log \kappa}+1)  \right)^{|\alpha|} \frac{|\alpha|!}{\log^{|\alpha|}(|\alpha|+e)} , \quad \forall \alpha\in \N^d.
\end{align}
\end{proposition}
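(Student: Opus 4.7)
The plan is to carry out the same induction scheme used in the proof of Theorem \ref{thm-2} for $1<p<\infty$, but now with the norm $\|\cdot\|_{l^qL^p(\R^d)}$ in place of $\|\cdot\|_{L^p(\R^d)}$. The elliptic regularity estimate \eqref{equ-ana-L1-812-1} is the correct replacement for the $L^p$-maximal regularity bound \eqref{equ-ana-high-1}; the only new feature is the extra term $\|u\|_{l^qL^p(\R^d)}$ on the right-hand side, which will turn out to be harmless in the induction.

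Concretely, I would prove by induction on $k\in\N_0$ that
\begin{align*}
\sup_{|\alpha|=k}\|D^\alpha u\|_{l^qL^p(\R^d)}\leq \frac{C^k k!}{\log^k(k+e)},
\end{align*}
where $C=\kappa C_0+K(\frac{1}{\log\kappa}+1)$ with $K=K(d,p)$ to be chosen large, and $C\geq \kappa C_0+1$. For the base case $k\leq 2$, the bound $\|u\|_{l^qL^p}=1$ combined with the equation \eqref{equ-elip}, the bounds $\|W\|_{L^\infty},\|V\|_{L^\infty}\leq 1$, the amalgam regularity \eqref{equ-ana-L1-812-1}, and a local interpolation inequality
\begin{align*}
\sup_{|\alpha|=1}\|D^\alpha u\|_{l^qL^p(\R^d)}\lesssim_{d,p} \bigl(\sup_{|\alpha|=2}\|D^\alpha u\|_{l^qL^p(\R^d)}\bigr)^{1/2}\|u\|_{l^qL^p(\R^d)}^{1/2}
\end{align*}
(which follows by applying the usual $L^p$ Gagliardo--Nirenberg interpolation on each unit cube $\mathcal{C}(j)$ and summing in $l^q$) yields $\sup_{|\alpha|\leq 2}\|D^\alpha u\|_{l^qL^p}\lesssim_{d,p}1$, handled by choosing $K$ large enough in its $(d,p)$-dependent part.

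For the inductive step, assume the bound holds for all $k\le n+1$ and consider $|\alpha|=n+2$. Applying \eqref{equ-ana-L1-812-1} and differentiating the equation \eqref{equ-elip} $n$ times,
\begin{align*}
\sup_{|\alpha|=n+2}\|D^\alpha u\|_{l^qL^p(\R^d)}\lesssim_{d,p}\sup_{|\alpha|=n}\|D^\alpha(W\cdot\nabla u+Vu)\|_{l^qL^p(\R^d)}+\sup_{|\alpha|=n}\|D^\alpha u\|_{l^qL^p(\R^d)}.
\end{align*}
By the Leibniz rule, the bounds \eqref{equ-V} on $W,V$, the induction hypothesis, and Lemma \ref{lem-bootstrap-high} applied exactly as in the derivation of \eqref{equ-99-1}, the first term on the right-hand side is bounded by
\begin{align*}
c_{d,p}K_1\bigl(\tfrac{1}{\log\kappa}+1\bigr)(d+1)\frac{C^{n+1}(n+2)!}{\log^{n+2}(n+2+e)}.
\end{align*}
The extra term $\sup_{|\alpha|=n}\|D^\alpha u\|_{l^qL^p}$ from \eqref{equ-ana-L1-812-1} is, by the induction hypothesis, bounded by $\frac{C^n n!}{\log^n(n+e)}$, which is smaller than $\frac{C^{n+2}(n+2)!}{\log^{n+2}(n+2+e)}$ by a factor $\lesssim 1/(C^2(n+1)(n+2))$ and is therefore absorbed. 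Choosing $K$ so that $K\geq K_2+c_{d,p}K_1(d+1)+($ a lower-order constant handling the $\|u\|_{l^qL^p}$ term$)$ closes the induction and yields \eqref{equ-ultra-ana-lqLp}.

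The only mildly nontrivial step is the base case, which requires the local interpolation inequality in the amalgam scale; but this reduces cube-by-cube to the classical Gagliardo--Nirenberg bound and hence poses no real difficulty. The core of the argument is identical to the $L^p$-case of Theorem \ref{thm-2}, with Lemma \ref{lem-bootstrap-high} providing the crucial combinatorial bootstrap.
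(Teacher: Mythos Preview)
Your proposal is correct and follows exactly the approach the paper indicates: repeat the induction scheme of the $1<p<\infty$ case of Theorem~\ref{thm-2} with the amalgam regularity estimate \eqref{equ-ana-L1-812-1} replacing \eqref{equ-ana-high-1}, and observe that the extra zero-order term $\|D^\alpha u\|_{l^qL^p}$ (with $|\alpha|=n$) is harmlessly absorbed in the induction. One small technical correction: the cube-by-cube Gagliardo--Nirenberg inequality on a bounded domain carries an additive lower-order term $\|u\|_{L^p(\mathcal{C}(j))}$ (a linear function shows the purely multiplicative form fails on a cube), so your base-case interpolation should read $\sup_{|\alpha|=1}\|D^\alpha u\|_{l^qL^p}\lesssim_{d,p}(\sup_{|\alpha|=2}\|D^\alpha u\|_{l^qL^p})^{1/2}\|u\|_{l^qL^p}^{1/2}+\|u\|_{l^qL^p}$; since $\|u\|_{l^qL^p}=1$, this changes nothing in the argument.
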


Now we prove Theorem \ref{thm-2} in the case $p=1$. We start with an interpolation inequality in \(L^1(\R^d)\). More preciesly, for any \(\varepsilon>0\) and all \(u\in L^1(\R^d), \Delta u\in L^1(\R^d)\), we have
\begin{align}\label{equ-99-9}
 \|\nabla u\|_{L^1(\R^d)}\lesssim_d \varepsilon \|\Delta u\|_{L^1(\R^d)}+ \frac{1}{\varepsilon}\|u\|_{L^1(\R^d)}.   
\end{align}
By a scaling \(u(x)\to u(\varepsilon x)\), it suffices to show that
\begin{align}\label{equ-99-10}
 \|\nabla u\|_{L^1(\R^d)}\lesssim_d \|(1-\Delta) u\|_{L^1(\R^d)}.   
\end{align}
To this end, let \(f=(1-\Delta) u\), then 
$$
\nabla u = \nabla G_2 * f, 
$$
where $G_s(x)$ is the integral kernel associated with $(1-\Delta)^{-s/2}$ with $s>0$, namely $(1-\Delta)^{-s/2}u=G_s * u$ for all $u$. Using the Fourier transform, $G_s$ has the representation
\begin{align}\label{equ-Gs-1}
G_s(x)=\frac{(2 \sqrt{\pi})^{-d}}{\Gamma\left(\frac{s}{2}\right)} \int_0^{\infty} e^{-t} e^{-\frac{|x|^2}{4 t}} t^{\frac{s-d}{2}} \frac{d t}{t}.
\end{align}
Moreover, one can show that $G_s(x)>0$ for all $x\in \R^d$, $G_s(x)\lesssim_{s,d} e^{-\frac{|x|}{2}} $ if $|x|\geq 2$ and
\begin{align}\label{equ-Gs-loc}
 G_s(x)\lesssim_{s,d}
\left\{
\begin{array}{ll}
 1+|x|^{s-d}, \quad & 0<s<d,\\
1+\log \frac{2}{|x|}, \quad & s=d,\\
1, \quad &s>{d}
\end{array}
\right.
\end{align}
if $|x|<2$; see \cite[Proposition 6.1.5]{GrafakosModernFourier2009} for a proof. It follows from \eqref{equ-Gs-1} that
\begin{align*}
|\nabla G_s(x)|\lesssim_{s,d} \int_0^{\infty} e^{-t} e^{-\frac{|x|^2}{4 t}} \frac{|x|}{2t} t^{\frac{s-d}{2}} \frac{d t}{t}\lesssim_{s,d} \int_0^{\infty} e^{-t} e^{-\frac{|x|^2}{8 t}}   t^{\frac{s-1-d}{2}} \frac{d t}{t}\lesssim_{s,d} G_{s-1}(\frac{x}{\sqrt{2}})
\end{align*}
for all $x\in \R^d$, where we used $e^{-\frac{|x|^2}{8 t}} \frac{|x|}{2t^{1/2}}\lesssim 1$. Thus, according to \eqref{equ-Gs-loc}, we have $\nabla G_2\in L^1(\R^d)$. Thus by Young's inequality of convolution,
$$
\|\nabla u\|_{L^1(\R^d)}=\|\nabla G_2 * f\|_{L^1(\R^d)}\leq \|\nabla G_2\|_{L^1(\R^d)} \|f\|_{L^1(\R^d)}\lesssim_d \|f\|_{L^1(\R^d)},
$$
which proves \eqref{equ-99-10}.

\medskip
  Let $u\in L^1(\R^d)$ be a solution to \eqref{equ-elip} with $\|u\|_{L^1(\R^d)}=1$. 
Using the equation \eqref{equ-elip} and the fact $\|W\|_{L^\infty(\R^d)}, \|V\|_{L^\infty(\R^d)}\leq 1$, we find 
$$
\|\Delta u\|_{L^1(\R^d)}\leq \|\nabla u\|_{L^1(\R^d)}+1.
$$
This, together with the interpolation inequality {\eqref{equ-99-9}} with small $\varepsilon>0$, implies that $\|\Delta u\|_{L^1(\R^d)}\lesssim_d 1$ and thus
\begin{align}\label{equ-99-11}
\|(1-\Delta) u\|_{L^1(\R^d)}\lesssim_d 1.
\end{align} 
Moreover, from \eqref{equ-Gs-loc}, there exists some $p_0\in(1,\infty)$ such that $G_2\in l^1L^{p_0}(\R^d)$. Owing to Young's inequality in $l^qL^p(\R^d)$, we deduce from \eqref{equ-99-11} that
$$
\|u\|_{l^1L^{p_0}(\R^d)}=\|G_2*(1-\Delta)u\|_{l^1L^{p_0}(\R^d)}\lesssim_d \|G_2\|_{l^1L^{p_0}(\R^d)}\|(1-\Delta)u\|_{L^1(\R^d)}  \lesssim_d 1.
$$
Now we apply Proposition \ref{prop-L1} to conclude that there exists a constant $K=K(d)>0$, depending only on $d$, such that for any $\kappa>1$, the solution satisfies 
\begin{align*}
 \|D^\alpha u\|_{l^1L^{p_0}(\R^d)}  &\leq \|u\|_{l^1L^{p_0}(\R^d)} \left(\kappa C_0+K(\frac{1}{\log \kappa}+1)  \right)^{|\alpha|} \frac{|\alpha|!}{\log^{|\alpha|}(|\alpha|+e)}\\
 &\leq c(d) \left(\kappa C_0+K(\frac{1}{\log \kappa}+1)  \right)^{|\alpha|} \frac{|\alpha|!}{\log^{|\alpha|}(|\alpha|+e)}
\end{align*}
for all $\alpha\in \N^d$. Note that by H\"{o}lder inequality, we have $\|D^\alpha u\|_{L^1(\R^d)}\leq \|D^\alpha u\|_{l^1L^{p_0}(\R^d)}$. Thus, $\|D^\alpha u\|_{L^1(\R^d)}$ satisfies the same upper bound. Choosing a new constant $K$ completes the proof in the case $p=1$.

\subsection{The case \texorpdfstring{$p=\infty$}{}}

In this case, the maximal regularity of the second-order elliptic equations fails, thus the iteration scheme used in previous subsection does not work now. To overcome this difficulty, we shall use Schauder estimates and iterate in H\"{o}lder spaces instead. We first recall several facts of H\"{o}lder spaces, which can be found in Krylov \cite{KrylovHolder1996}.

For $k=0,1,2, \ldots$, we denote by $C_{{loc }}^k(\R^d)$ the set of all functions $u=u(x)$ whose derivatives $D^\alpha u$ for $|\alpha| \leq k$ are continuous in $\R^d$. We set
$$
|u|_{0 ; \R^d}=[u]_{0 ; \R^d}=\sup _{\R^d}|u|, \quad[u]_{k ; \R^d}=\max _{|\alpha|=k}\left|D^\alpha u\right|_{0 ; \R^d} .
$$

 For $k=0,1,2, \ldots$ the space $C^k(\R^d)$ is the Banach space of all functions $u \in C_{loc}^k(\R^d)$ for which the following norm
$$
|u|_{k ; \R^d}=\sum_{j=0}^k[u]_{j ; \R^d}
$$
is finite. If $0<\delta<1$, we call $u$ H\"{o}lder continuous with exponent $\delta$ in $\R^d$ if the seminorm
$$
[u]_{\delta ; \R^d}=\sup _{\substack{x, y \in \R^d \\ x \neq y}} \frac{|u(x)-u(y)|}{|x-y|^\delta}
$$
is finite. This seminorm is called H\"{o}lder's constant of $u$ of order $\delta$.   We set
$$
[u]_{k+\delta ; \R^d}=\max _{|\alpha|=k}\left[D^\alpha u\right]_{\delta ; \R^d} .
$$

 For $0<\delta<1$ and $k=0,1,2, \ldots$ the H\"{o}lder space $C^{k+\delta}(\R^d)$ is the Banach space of all functions $u \in C^k(\R^d)$ for which the norm
$$
|u|_{k+\delta ; \R^d}=|u|_{k ; \R^d}+[u]_{k+\delta ; \R^d}
$$
is finite.

For $0<\delta<1$ and all functions $u,v\in C^\delta(\R^d)$,
\begin{align}\label{equ-hold-prod}
[uv]_{\delta ; \R^d}\leq [u]_{\delta ; \R^d}|v|_{0 ; \R^d}+|u|_{0 ; \R^d}[v]_{\delta ; \R^d}.
\end{align}
For all $0\leq s\leq r$, we have the interpolation inequality  in H\"{o}lder spaces
\begin{align}\label{equ-hold-inter}
[u]_{s ; \R^d}\lesssim_{r,d} \varepsilon^{r-s}[u]_{r ; \R^d}+\varepsilon^{-s}|u|_{0 ; \R^d}, \quad \forall \varepsilon>0;
\end{align}
see \cite[Theorem 3.2.1]{KrylovHolder1996}. The Schauder estimate (see \cite[Theorem 3.4.1]{KrylovHolder1996}) reads that
\begin{align}\label{equ-schauder}
[u]_{2+\delta ; \R^d}\lesssim_d [\Delta u]_{\delta ; \R^d}.
\end{align}

From now on, we fix $\delta=\frac{1}{2}$.

Let $u\in L^\infty(\R^d)$ be a solution of \eqref{equ-elip} with $\|u\|_{L^\infty(\R^d)}=1$.
Now we  show that there exist  constants $K_2, K_3>0$ depending only on $d$  such that
\begin{align}\label{equ-ana-high-infty-1}
 [ u]_{k+\delta ; \R^d}\leq K_2\frac{C^{k}k!}{\log^{k}(k+e)}, \quad \forall k\in \N,
\end{align}
where $C=\kappa C_0+ K_3K_1 (\frac{1}{\log \kappa}+1)$ and $K_1$ is the same as that in \eqref{equ-high-bi-1.5}.

Assuming that \eqref{equ-ana-high-infty-1} holds for all $k\leq n+1$ with some $n\geq 1$, we show that it also holds for $k=n+2$. To this end,  we first give a bound of $[u]_{k; \R^d}$  under the assumption \eqref{equ-ana-high-infty-1}. We could not apply \eqref{equ-hold-inter} directly, since the implicit constant may depend on $r$. However, a bound of \([u]_{k; \R^d}\) can be obtained by interpolating between \([u]_{k+\delta; \R^d}\) and \([u]_{k-1+\delta; \R^d}\). This is done in the following lemma.

%By the interpolation inequality \eqref{equ-hold-inter},
%\begin{align}\label{equ-ana-high-infty-2}
% [ u]_{k; \R^d}\lesssim_{}
%\end{align}

\begin{lemma}\label{lem-ana-high-infty-1}
If \eqref{equ-ana-high-infty-1} holds for all $k\leq n+1$, then
\begin{align}\label{equ-ana-high-infty-3}
[u]_{k; \R^d}\lesssim_d   \frac{C^{k-\frac{1}{2}}k!}{k^{\frac{1}{2}}\log^{k-\frac{1}{2}}(k+e)}, \quad \forall 1\leq k\leq n+1.
\end{align}
\end{lemma}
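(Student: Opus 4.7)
The plan is to deduce \eqref{equ-ana-high-infty-3} from the multiplicative Hölder interpolation
\begin{align*}
[v]_{1;\R^d}\lesssim_d [v]_{1/2;\R^d}^{1/2}\,[v]_{3/2;\R^d}^{1/2},
\end{align*}
applied to $v=D^\alpha u$ for each $|\alpha|=k-1$, and then substituting the hypothesis \eqref{equ-ana-high-infty-1} at indices $k-1$ and $k$. The displayed inequality is not in the form of \eqref{equ-hold-inter} (which interpolates between $|u|_{0;\R^d}$ and $[u]_{r;\R^d}$), so I would verify it by hand: a first-order Taylor expansion gives $|\nabla v(x)\cdot h|\leq |v(x+h)-v(x)|+[v]_{3/2;\R^d}|h|^{3/2}\leq [v]_{1/2;\R^d}|h|^{1/2}+[v]_{3/2;\R^d}|h|^{3/2}$; choosing $h=\lambda e_i$, dividing by $\lambda$, and optimizing the scale $\lambda$ yields $|\partial_i v|_{0;\R^d}\leq 2[v]_{1/2;\R^d}^{1/2}[v]_{3/2;\R^d}^{1/2}$.

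Next I pass this to $u$: for each multi-index $\alpha$ with $|\alpha|=k-1$, the function $v=D^\alpha u$ satisfies $[v]_{1/2;\R^d}\leq [u]_{k-1+\delta;\R^d}$ and $[v]_{3/2;\R^d}\leq [u]_{k+\delta;\R^d}$, so
\begin{align*}
[u]_{k;\R^d}=\max_{|\alpha|=k-1}|\nabla D^\alpha u|_{0;\R^d}\lesssim_d [u]_{k-1+\delta;\R^d}^{1/2}\,[u]_{k+\delta;\R^d}^{1/2}.
\end{align*}
For $k=1$ the first factor on the right is the $k=0$ instance of \eqref{equ-ana-high-infty-1}, which reads $[u]_{\delta;\R^d}\leq K_2$; for $2\leq k\leq n+1$ both factors are supplied directly by \eqref{equ-ana-high-infty-1}.

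Inserting the two bounds yields
\begin{align*}
[u]_{k;\R^d}\lesssim_d K_2\,\frac{C^{k-1/2}\sqrt{(k-1)!\,k!}}{\sqrt{\log^{k-1}(k-1+e)\,\log^{k}(k+e)}}.
\end{align*}
The remaining arithmetic is routine: $\sqrt{(k-1)!\,k!}=k!/\sqrt{k}$, and $\log^{k-1}(k-1+e)\sim\log^{k-1}(k+e)$ by the same argument used for \eqref{equ-aj-5}, since $\log(k+e)-\log(k-1+e)=O(1/k)$ gives $\bigl(\log(k-1+e)/\log(k+e)\bigr)^{k-1}\sim 1$. These two observations collapse the right-hand side to a constant times $C^{k-1/2}k!/\bigl(\sqrt{k}\,\log^{k-1/2}(k+e)\bigr)$, which is precisely \eqref{equ-ana-high-infty-3}. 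The only step requiring any thought is the multiplicative interpolation inequality, because \eqref{equ-hold-inter} is stated only in its additive form between $|u|_{0;\R^d}$ and $[u]_{r;\R^d}$; everything after that is a mechanical combination with the elementary asymptotics already appearing in the paper.
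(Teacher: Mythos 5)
Your proof is correct, and it converges with the paper's at exactly the right intermediate estimate
\begin{align*}
[u]_{k;\R^d}\lesssim_d [u]_{k-1+\delta;\R^d}^{1/2}[u]_{k+\delta;\R^d}^{1/2},
\end{align*}
after which the arithmetic ($\sqrt{(k-1)!\,k!}=k!/\sqrt{k}$ and $\log^{k-1}(k-1+e)\sim\log^{k-1}(k+e)$) is the same in both. The difference is how this interpolation is derived. You prove it pointwise: a first-order Taylor expansion of $v=D^\alpha u$ along a coordinate direction $h=\lambda e_i$ gives $\lambda|\partial_i v(x)|\le [v]_{1/2}\lambda^{1/2}+\tfrac{2}{3}[v]_{3/2}\lambda^{3/2}$, and optimizing $\lambda$ yields $|\partial_i v|_{0;\R^d}\lesssim [v]_{1/2}^{1/2}[v]_{3/2}^{1/2}$. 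The paper instead mollifies, writing $D^\alpha u(x)$ as $\int (D^\alpha u(x)-D^\alpha u(x+y))\eta_r+\int D^\alpha u(x+y)\eta_r$, bounding the first by $[u]_{k+\delta}r^\delta$ and the second — after integrating by parts onto $\partial_j\eta_r$ — by $[u]_{k-1+\delta}r^{-\delta}$, then optimizing $r$. The two mechanisms are essentially equivalent: both trade a half-derivative against a power of the length scale and balance. Your Taylor derivation is a touch leaner (no auxiliary bump function), whereas the mollifier version is robust when pointwise differentiability of $D^\alpha u$ is in question. In either case, re-deriving the interpolation by hand is necessary and you correctly identify why: the quoted \eqref{equ-hold-inter} carries an $r$-dependent constant and interpolates against $|u|_{0;\R^d}$, so it cannot be used as stated — the paper makes the same remark before its own derivation.
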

\begin{proof}
Fix $1\leq k\leq n+1$ and $\alpha\in \N^d$ with $|\alpha|=k$.  Let $x\in \R^d$ and $\eta\in C_c^\infty(B_1(x))$ be a non-negative {smooth} function such that $\int_{\R^d}\eta(y)\d y =1$. For every $r>0$, set $\eta_r(y)=r^{-d}\eta(y/r), y\in \R^d$. Then
\begin{align}\label{equ-98-1}
  D^\alpha u(x)=\int_{\R^d}\Big( D^\alpha u(x)-D^\alpha u(x+y)\Big) \eta_r(y) \d y+   \int_{\R^d} D^\alpha u(x+y)  \eta_r(y) \d y.
\end{align}
The first term on the right-hand side of \eqref{equ-98-1} is bounded by
\begin{align}\label{equ-98-2}
\int_{\R^d}[D^\alpha u]_{\delta;\R^d}r^\delta \eta_r(y) \d y\leq [u]_{k+\delta;\R^d}r^\delta.    
\end{align}
Let $e_j$ be the unit vector in the $j$-th direction in $\R^d$. Using integration by parts to the second term {and the fact that $\int_{\R^d} \partial_j\eta_r(y) \d y=0$}, we have for $1\leq j\leq d$,
\begin{align*}
 \int_{\R^d} D^\alpha u(x+y)  \eta_r(y) \d y &= -\int_{\R^d} D^{\alpha-e_j} u(x+y)  \partial_j\eta_r(y) \d y\\
 &=\int_{\R^d} \Big(D^{\alpha-e_j} u(x)-D^{\alpha-e_j} u(x+y)\Big)  \partial_j\eta_r(y) \d y. 
\end{align*}
It follows that
\begin{align}\label{equ-98-3}
\left|\int_{\R^d} D^\alpha u(x+y)  \eta_r(y) \d y \right|
\leq  \int_{\R^d} [u]_{k-1+\delta;\R^d}r^{\delta} |\partial_j\eta_r(y)| \d y\lesssim_d [u]_{k-1+\delta;\R^d}r^{-\delta}
\end{align}
since $\|\partial_j\eta_r\|_{L^1(\R^d)}\lesssim_d r^{-1}$ and $\delta=\frac{1}{2}$. 

Combining \eqref{equ-98-1}-\eqref{equ-98-3} we obtain
$$
|D^\alpha u(x)|\lesssim_d [u]_{k+\delta;\R^d}r^\delta+ [u]_{k-1+\delta;\R^d}r^{-\delta}.   
$$
Optimizing with respect to $r>0$ and using \eqref{equ-ana-high-infty-1},  we get
\begin{align*}
|D^\alpha u(x)|\lesssim_d ([u]_{k+\delta;\R^d}[u]_{k-1+\delta;\R^d})^{\frac{1}{2}}
\lesssim_d  \frac{C^{k-\frac{1}{2}}k!}{k^{\frac{1}{2}}\log^{k-\frac{1}{2}}(k+e)}.
\end{align*}
Taking the supremum with $x$ gives \eqref{equ-ana-high-infty-3}.
\end{proof}

Now we estimate $[u]_{n+2+\delta; \R^d}$.
By the Schauder estimate \eqref{equ-schauder} and the equation \eqref{equ-elip}, we have
\begin{align*}
  [u]_{n+2+\delta; \R^d} &\leq c_d  [\Delta u]_{n+\delta; \R^d}= c_d\sup_{|\alpha|=n}[D^\alpha(W\cdot \nabla u +Vu)]_{\delta; \R^d}\\
 &\leq \sup_{|\alpha|=n}  \sum_{\beta+\gamma=\alpha}\frac{\alpha!}{\beta!\gamma!}\Big(  [D^\beta W \cdot D^\gamma \nabla u]_{\delta; \R^d} + [D^\beta V D^\gamma u]_{\delta; \R^d}\Big)={\rm I}+{\rm II}.
\end{align*}
To bound ${\rm I}$, we note that
\begin{align}\label{equ-ana-high-infty-6}
[D^\beta W ]_{\delta; \R^d}\lesssim_d C_0^{|\beta|+\frac{1}{2}}, \quad \forall \beta\in \N^d.
\end{align}
In fact, by the definition, for any $\varepsilon>0$
$$
[D^\beta W]_{\delta; \R^d}\leq \sup _{\substack{x, y \in \R^d \\ 0<|x-y|\leq \varepsilon}} \frac{|D^\beta W(x)-D^\beta W(y)|}{|x-y|^\delta} + \sup _{\substack{x, y \in \R^d \\ |x-y|>\varepsilon}} \frac{|D^\beta W(x)-D^\beta W(y)|}{|x-y|^\delta}.
$$
The first term is bounded by $d\varepsilon^{1-\delta}C_0^{|\beta|+1}$ by the mean value theorem, while the second term is bounded by $2C_0^{|\beta|}\varepsilon^{-\delta}$. Optimizing with respect to $\varepsilon$ gives \eqref{equ-ana-high-infty-6}.

Using \eqref{equ-hold-prod}, \eqref{equ-ana-high-infty-1}, \eqref{equ-ana-high-infty-3}, and \eqref{equ-ana-high-infty-6}, we get
\begin{align*}
{\rm I} &\lesssim_d\sup_{|\alpha|=n}  \sum_{\beta+\gamma=\alpha}\frac{\alpha!}{\beta!\gamma!}\Big([D^\beta W]_{\delta; \R^d}|D^\gamma \nabla u|_{0; \R^d}+|D^\beta W|_{0; \R^d}[D^\gamma \nabla u]_{\delta; \R^d}  \Big)\\
&\lesssim_d K_2 \sup_{|\alpha|=n}  \sum_{\beta+\gamma=\alpha}\frac{\alpha!}{\beta!\gamma!}\left( C_0^{|\beta|+\frac{1}{2}}  \frac{C^{|\gamma|+\frac{1}{2}}(|\gamma|+1)!}{\sqrt{|\gamma|+1}\log^{|\gamma|+\frac{1}{2}}(|\gamma|+1+e)} + C_0^{|\beta|}  \frac{C^{|\gamma|+1}(|\gamma|+1)!}{\log^{|\gamma|+1}(|\gamma|+1+e)}  \right)\\
&\lesssim_d K_2 \sup_{|\alpha|=n}  \sum_{\beta+\gamma=\alpha}\frac{\alpha!}{\beta!\gamma!} C_0^{|\beta|}  \frac{C^{|\gamma|+1}{(|\gamma|+1)!}}{\log^{|\gamma|{+1}}(|\gamma|+1+e)} 
\end{align*}
since $C_0\leq C$. Thus, similar to \eqref{equ-99-1}, we have
$$
I\lesssim K_1K_2(\frac{1}{\log \kappa}+1)  \frac{(n+2)!  C^{n+1}}{\log^{n+2}(n+2+e)}.
$$

 Similarly, the second term $\rm II$ has the same upper bound. Adding the estimates of $\rm I$ and $\rm II$ together, we obtain
\begin{align*}
 [u]_{n+2+\delta; \R^d}\leq c_d  K_1K_2(\frac{1}{\log \kappa}+1) \frac{ C^{n+1} (n+2)!}{ \log^{n+2}(n+2+e)}
\end{align*}
for some constant $c_d>0$ depending only on $d$.
Setting $K_3=c_d  K_1$, we conclude \eqref{equ-ana-high-infty-1} with $k=n+2$.

It remains to show \eqref{equ-ana-high-infty-1} for $k\leq 2$. Since $u\in L^\infty(\R^d)$ is a solution of \eqref{equ-elip} with $\|u\|_{L^\infty(\R^d)}=1$, the Schauder estimate \eqref{equ-schauder} gives
\begin{align*}
 [u]_{2+\delta ; \R^d}&\lesssim_d[\Delta u]_{\delta ; \R^d} = [W\cdot \nabla u+Vu]_{\delta ; \R^d}\\
 &\lesssim_d |W|_{0;\R^d}[u]_{1+\delta;\R^d}+[W]_{\delta;\R^d}[u]_{1;\R^d}+|V|_{0;\R^d}[u]_{\delta;\R^d}+[V]_{\delta;\R^d}[u]_{0;\R^d}\\
 &\lesssim_d
[u]_{1+\delta;\R^d}+C_0^{\frac{1}{2}}[u]_{1;\R^d}+ [u]_{\delta;\R^d}+C_0^{\frac{1}{2}}. 
\end{align*}
Moreover, by the interpolation inequality \eqref{equ-hold-inter}, we bound the terms on the right-hand side as
$$
 [u]_{1+\delta ; \R^d}\lesssim_d \varepsilon[u]_{2+\delta ; \R^d} +\varepsilon^{-1-\delta}, \quad [u]_{\delta ; \R^d}\lesssim_d \varepsilon^2[u]_{2+\delta ; \R^d} +\varepsilon^{-\delta}
$$
and with a scaling
$$
C_0^{\frac{1}{2}}[u]_{1;\R^d}\lesssim_d \varepsilon^{1+\delta}[u]_{2+\delta;\R^d}+\varepsilon^{-1}C_0^{\frac{1}{2}(1+\frac{1}{1+\delta})}
$$
for any $\varepsilon>0$. Taking $\varepsilon>0$ small enough, we get
$$
[u]_{2+\delta ; \R^d}\lesssim_d C_0^{\frac{1}{2}}+C_0^{\frac{1}{2}(1+\frac{1}{1+\delta})}\lesssim 1+C_0.
$$
Then, we conclude that $[u]_{k+\delta ; \R^d}\lesssim_d 1+C_0$ for all $k\leq 2$. Thus, \eqref{equ-ana-high-infty-1} holds with some large $K_2$ depending only on $d$ for all $k\in \N$.

Finally, by \eqref{equ-ana-high-infty-1} and Lemma \ref{lem-ana-high-infty-1}, we have
$$
[u]_{k; \R^d}\lesssim_d   \frac{C^{k-\frac{1}{2}}k!}{\sqrt{k}\log^{k-\frac{1}{2}}(k+e)}\lesssim \frac{C^{k-\frac{1}{2}}k!}{\log^{k}(k+e)}, \quad \forall 1\leq k\in \N,
$$
where $C=\kappa C_0+ K_3K_1 (\frac{1}{\log \kappa}+1)$. Choosing another constant $K_3$, we find
$$
[u]_{k; \R^d}\leq \frac{C^{k}k!}{\log^{k}(k+e)}, \quad \forall k\in \N.
$$
This completes the proof in the case when $p=\infty$.

%\section*{Acknowledgements}

%Dong is partially supported by the NSF under agreement DMS-2055244 and the Simons Fellows Award 007638. Wang is partially supported by the National Natural Science Foundation of China under grants 12171442 and 12171178.

%\footnotesize
\bibliographystyle{siam} %  
%\bibliography{ref.bib} %  

\end{document}